\newtheorem{thm}{Theorem}[section]
\newtheorem{prop}[thm]{Proposition}
\newtheorem{lem}[thm]{Lemma}
\newtheorem{cor}[thm]{Corollary}
\theoremstyle{remark}
\newtheorem{rem}[thm]{Remark}
\newtheorem{exa}[thm]{Example}
\newtheorem{setup}[thm]{Setup}
\theoremstyle{definition}
\newtheorem{defi}[thm]{Definition}
\newcommand{\Z}{\mathbb{Z}}
\newcommand{\R}{\mathbb{R}}
\newcommand{\N}{\mathbb{N}}
\DeclareMathOperator{\id}{id}
\DeclareMathOperator{\rk}{rk}
\DeclareMathOperator{\rg}{rg}
\DeclareMathOperator{\tors}{tors}
\DeclareMathOperator{\map}{map}
\DeclareMathOperator{\res}{res}
\DeclareMathOperator{\const}{const}
\def\epsilon{\varepsilon}
\DeclareMathOperator{\sign}{sign}
\def\sv#1{\|#1\|}
\newcommand\norm{\bBigg@{0.8}}
\newcommand{\ifsv}[2][norm]{\csname #1l\endcsname\bracevert\!#2\!%
                            \csname #1r\endcsname\bracevert}
\newcommand{\stisv}[2][norm]{\indnorml[#1]{#2}{\Z}{\infty}}
\newcommand{\indnorml}[4][flex]{\csname #1l\endcsname\|#2%
                                 \csname #1r\endcsname\|_{#3}^{#4}\mathclose{}}
\def\ucov#1{%
  \widetilde{#1}
}
\author{Daniel Fauser}
\title[Integral foliated simplicial volume and $S^1$-actions]%
      {Integral foliated simplicial volume and $S^1$-actions}
\date{\today.\ \copyright{\ D.~Fauser 2017}.
	This work was supported by the CRC~1085 \emph{Higher Invariants} (Universit\"at Regensburg, funded by the DFG).
\\
MSC 2010 classification. 55N10, 57R19, 57S15.}
\begin{document}

\phantom{.}
\vspace{-.1\baselineskip}

\maketitle

\begin{abstract}
	The simplicial volume of
	oriented closed connected smooth manifolds that admit a non-trivial smooth $S^1$-action vanishes.
	In the present work we prove a version of this result for
	the integral foliated simplicial volume of aspherical manifolds:
	The integral foliated simplicial volume of aspherical oriented closed
	connected smooth manifolds that admit a non-trivial smooth $S^1$-action vanishes.
	Our proof uses the geometric construction of Yano's proof for ordinary simplicial volume
	as well as the parametrised uniform boundary condition for~$S^1$.
\end{abstract}

\section{Introduction}

It is a long standing question of Gromov whether all $L^2$-Betti numbers of an aspherical
oriented closed connected manifold with trivial simplicial volume are zero~\cite[p.~232]{gromov}.
For such manifolds with non-trivial $S^1$-action it is known that all $L^2$-Betti numbers 
vanish~\cite[Corollary~1.43]{lueck}.
Moreover, Gromov and Yano independently showed that the simplicial volume of oriented closed connected smooth manifolds with non-trivial smooth $S^1$-action is zero~\cite{vbc,yano}.
The integral foliated simplicial volume (see Subsection~\ref{subsec:ifsv}) yields an upper bound for the
$L^2$-Betti numbers as well as for the ordinary simplicial volume~\cite{schmidt}.
This leads to the question whether the integral foliated simplicial volume of an aspherical 
oriented closed connected smooth manifold with non-trivial smooth $S^1$-action is also zero.
In this work we prove the following result, which answers this question in the positive.

\begin{thm}[integral foliated simplicial volume and $S^1$-actions]
\label{thm:mainthm}
	Let~$M$ be an oriented compact connected smooth 
	manifold that admits a smooth $S^1$-action without fixed points
	such that the inclusion of every orbit into~$M$ is $\pi_1$-injective.
	Then~$\ifsv{M,\partial M} =0$.
	
	More precisely: If~$\alpha\colon \pi_1 (M) \curvearrowright (Z,\mu)$ is an essentially free
	standard $\pi_1(M)$-space, then~$\ifsv{M,\partial M}^\alpha =0$.
\end{thm}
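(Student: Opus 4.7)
The plan is to adapt Yano's geometric smearing construction for ordinary simplicial volume to the parametrised setting, using a parametrised uniform boundary condition for the circle to convert small approximate cycles into genuine parametrised fundamental $\alpha$-cycles. The first step is to exploit the $\pi_1$-injectivity of every orbit in order to lift the $S^1$-action, orbitwise, to the universal cover $\widetilde M$: each orbit $S^1\cdot x$ then lifts to a collection of embedded copies of $\R$ (or circles) in $\widetilde M$ that is permuted by $\pi_1(M)$. Choosing a measurable transversal to the orbit foliation produces a measurable, $\pi_1(M)$-equivariant family of orbit lifts that can be paired with the essentially free standard action $\alpha$ on $(Z,\mu)$ in a consistent way.

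Next, the geometric smearing. Starting from a relative fundamental cycle coming from a fine smooth triangulation of $M$ adapted to the $S^1$-action, in which every top simplex is essentially an arc in $S^1$ crossed with a transverse simplex, I would subdivide the $S^1$-direction into $N$ equal arcs and distribute the weight $1/N$ using the parametrisation $L^\infty(Z,\Z)$: the integrality of the coefficients is recovered by spreading the weight over $N$ translates of a measurable fundamental domain via a measure-preserving $\Z/N$-symmetry coming from the $S^1$-direction. This yields a parametrised $n$-chain $c_N$ whose parametrised $\ell^1$-norm is of order $1/N$ but which fails to be a cycle by a boundary term $b_N$ concentrated on the subdivision interfaces.

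The boundary correction uses the parametrised uniform boundary condition for $S^1$: by construction $b_N$ lies in the image of the $S^1$-chain complex tensored with the parametrised coefficients and has norm of order $1/N$, so there exists a parametrised filling $f_N$ with $\partial f_N = b_N$ and $\|f_N\| \le K\|b_N\| = O(1/N)$ for a constant $K$ independent of $N$. Then $c_N + f_N$ is a genuine parametrised fundamental $\alpha$-cycle of norm $O(1/N)$, and letting $N\to\infty$ gives $\ifsv{M,\partial M}^\alpha = 0$.

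The main obstacle is to set up the smearing so that (i) the error $b_N$ lies in the range to which the parametrised UBC for $S^1$ applies, and (ii) the choice of orbit lifts, transversals and measurable fundamental domains is sufficiently equivariant and measurable for the resulting $c_N$ to be a bona fide parametrised chain with coefficients in $L^\infty(Z,\Z)$. The hypothesis that every orbit inclusion is $\pi_1$-injective is precisely what allows item (ii) to be arranged consistently across the orbit foliation; without it, the geometric $S^1$-direction on $M$ could fail to lift to a globally coherent transformation of the parametrised chain complex.
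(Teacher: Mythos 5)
Your overall plan (Yano's geometric construction plus the parametrised UBC for $S^1$) is the right one, but as stated the argument has a genuine gap that the paper's detailed hollowing machinery is precisely designed to close.

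The problem is with the claim that the boundary error~$b_N$ from your smeared chain~$c_N$ ``lies in the image of the $S^1$-chain complex tensored with the parametrised coefficients.'' The chain~$b_N$ is an $(n-1)$-dimensional object concentrated on the subdivision interfaces, and the parametrised UBC for~$S^1$ is a statement about the normed chain complex~$C_*(S^1;\alpha)$ itself, which only has content in degrees~$0$ and~$1$. To fill an $(n-1)$-dimensional parametrised cycle efficiently one needs it to live in the parametrised chain complex of a \emph{space} that is chain-homotopy equivalent to~$S^1$ with bounded homotopies (so that Proposition~\ref{prop:ubc} applies). Without preliminary geometric preparation, nothing guarantees that the interfaces in your triangulation have this homotopy type, and indeed generically they will not. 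This is exactly what the hollowing sequence $M_{n-2}\to\dots\to M_0=M$ buys: Yano arranges matters so that the pieces $X_{j_1,\dots,j_k}$ on which the boundary corrections accumulate decompose as $\overline X_{j_1,\dots,j_k}\times S^1$ with contractible $\overline X_{j_1,\dots,j_k}$ (Lemma~\ref{lem:trivialbundle}), making the parametrised UBC applicable degree by degree. You cannot bypass this.

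A second, related issue is that a fixed-point-free $S^1$-action need not be free or even locally free: orbits can have varying finite isotropy, so there is no global ``arc in $S^1$ crossed with a transverse simplex'' product structure, and no canonical ``measure-preserving $\Z/N$-symmetry coming from the $S^1$-direction'' acting on $(Z,\mu)$. The $\pi_1$-injectivity of orbits is used, but it does not by itself turn the orbit foliation into a parametrised $S^1$-bundle; the $L_r$-strata of exceptional orbits are what forces the induction over hollowings. Finally, the starting point for the small cycle in the paper is not a smearing on $M$ at all, but an explicit small parametrised fundamental cycle on $M_{n-2}\cong\overline M_{n-2}\times S^1$ built from an integral triangulation cycle on $\overline M_{n-2}$ crossed with a cheap $\alpha'$-parametrised circle cycle (Proposition~\ref{prop:trivialbundle}); the inductive filling then pushes this forward to $M$. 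Your smearing idea, if it can be made precise, would be a different construction of the initial cheap chain, but you would still need the full hollowing and inductive-filling apparatus to make the boundary corrections land where UBC for~$S^1$ is available.
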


\begin{cor}
\label{cor:maincor}
	Let~$M$ be an aspherical oriented closed connected smooth manifold
	that admits a non-trivial smooth $S^1$-action.
	Then~$\ifsv M =0$.
	
	More precisely: If~$\alpha\colon \pi_1 (M) \curvearrowright (Z,\mu)$ is an essentially free
	standard $\pi_1(M)$-space, then~$\ifsv{M}^\alpha =0$.
\end{cor}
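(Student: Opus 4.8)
The plan is to derive Corollary~\ref{cor:maincor} directly from Theorem~\ref{thm:mainthm}; the whole issue is to translate the hypothesis ``aspherical, closed, non-trivial smooth $S^1$-action'' into the hypothesis of the theorem, i.e.\ a smooth $S^1$-action without fixed points all of whose orbit inclusions are $\pi_1$-injective.

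First I would reduce to an effective action. Write $\rho$ for the given $S^1$-action; since $\rho$ is non-trivial, its kernel is a closed proper subgroup of $S^1$, hence a finite cyclic group $K$, and $\rho$ descends to an effective, still non-trivial, smooth action $\bar\rho$ of $S^1/K\cong S^1$ on $M$ with the same orbits (as subsets of $M$) and the same fixed-point set as $\rho$. Next I would invoke the classical structure theorem of Conner and Raymond on toral actions on aspherical manifolds: an effective smooth $S^1$-action on a closed aspherical manifold is injective, and in particular has no fixed points. Consequently $\rho$ has no fixed points either, every $\rho$-orbit is a circle of the form $S^1/\Z_k$ with $k\ge 1$, and its inclusion into $M$ is $\pi_1$-injective: the orbit map factors as the $k$-fold covering $S^1\to S^1/\Z_k$ followed by the inclusion $S^1/\Z_k\hookrightarrow M$, the composite $\Z\to\pi_1(M)$ is injective by injectivity of the action, and a homomorphism $\Z\to\pi_1(M)$ whose restriction to the subgroup $k\Z$ is injective is itself injective. (Alternatively one cites directly that a fixed-point free injective $S^1$-action has $\pi_1$-injective orbit inclusions.)

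At this point $\rho$ is a smooth $S^1$-action on $M$ without fixed points all of whose orbit inclusions are $\pi_1$-injective, and since $M$ is closed we have $\partial M=\emptyset$, hence $\ifsv{M,\partial M}=\ifsv{M}$ and, for every essentially free standard $\pi_1(M)$-space $\alpha\colon\pi_1(M)\actson(Z,\mu)$, also $\ifsv{M,\partial M}^\alpha=\ifsv{M}^\alpha$. Theorem~\ref{thm:mainthm} then applies verbatim and yields $\ifsv{M}=0$, and more precisely $\ifsv{M}^\alpha=0$.

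I do not expect a genuine obstacle in this deduction: all the real work is already carried out in Theorem~\ref{thm:mainthm}, via Yano's geometric construction and the parametrised uniform boundary condition for $S^1$. The only points that need care are the (harmless) reduction to an effective action and the correct invocation of the Conner--Raymond injectivity theorem, together with the implicit use of smoothness of the action, e.g.\ through the slice theorem, both in that theorem and in describing the orbits near a hypothetical fixed point.
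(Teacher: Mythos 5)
Your proposal is correct and follows essentially the same route as the paper: the paper's one-line proof simply cites L\"uck's Corollary~1.43, which packages exactly the Conner--Raymond structure theory you invoke (no fixed points and $\pi_1$-injective orbit inclusions for non-trivial smooth $S^1$-actions on closed aspherical manifolds), and then applies Theorem~\ref{thm:mainthm} in the closed case. Your version merely unpacks that reference, spelling out the reduction to an effective action and the elementary argument that injectivity on $k\Z$ forces injectivity on $\Z$, neither of which changes the substance of the deduction.
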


Corollary~\ref{cor:maincor} directly follows from Theorem~\ref{thm:mainthm}
and a result on the structure of non-trivial $S^1$-actions on aspherical closed manifolds of 
L\"uck~\cite[Corollary~1.43]{lueck}.

For the proof of Theorem~\ref{thm:mainthm}, we will combine Yano's construction with
the parame\-trised uniform boundary condition for~$S^1$~\cite{varubc}.
Note that one can also drop the condition on the $S^1$-action to be fixed point
free in Theorem~\ref{thm:mainthm}~\cite[Theorem~4.2.1]{thesis} but we keep it for 
convenience in this work.

Sauer established an upper bound on the $L^2$-Betti numbers of aspherical manifolds
in terms of minimal volume~\cite[p.~3]{sauer} 
and Braun improved this result to boundedness from above of the 
integral foliated simplicial volume in terms of minimal volume~\cite[Corollary~9]{braun}.
Since the minimal volume of compact smooth manifolds
with locally free $S^1$-action is zero~\cite[Appendix~2]{vbc}
the result of Braun yields an alternative proof of
Corollary~\ref{cor:maincor}~\cite[Corollary~5.6]{braun}.

\subsection*{Applications}

Apart from having a new proof for vanishing of all $L^2$-Betti numbers of aspherical manifolds with non-trivial smooth $S^1$-action, Theorem~\ref{thm:mainthm} yields applications to gradient invariants: We have a new approximation result for simplicial volume and hence vanishing results for the Betti number gradient, the torsion homology gradient, the rank gradient and the cost
of (fundamental groups of) aspherical manifolds with non-trivial smooth $S^1$-action.

We first recall some definitions:
The \emph{stable integral simplicial volume} of an oriented closed connected manifold~$M$ with fundamental group~$\Gamma$ is given by
\[ \stisv M := \inf_{\Lambda\in F(\Gamma)} \frac{\sv{\widetilde{M} / \Lambda}_\Z}{[\Gamma:\Lambda]},
\]
where~$F(\Gamma)$ is the set of all finite index subgroups of~$\pi_1(M)$.
A \emph{residual chain} in a finitely generated group~$\Gamma$ is a descending
sequence~$\Gamma = \Gamma_0 > \Gamma_1> \dots$ of normal finite index subgroups
whose intersection is trivial.

\begin{cor}[stable integral simplicial volume and $S^1$-actions]
	Let~$M$ be an aspherical oriented closed connected smooth manifold
	with residually finite fundamental group~$\Gamma$ that admits a
	non-trivial smooth $S^1$-action. Then, we have
	\[ \stisv M = 0.
	\]
	
	More generally: If~$(\Gamma_n)_{n\in \N}$ is a \emph{Farber chain}~\cite{abertnikolov}
	(e.g., a residual chain) in~$\Gamma$, then
	\[ \inf_{n\in N} \frac{\sv{M_n}_\Z}{[\Gamma:\Gamma_n]} = 0,
	\]
	where~$M_n \longrightarrow M$ denotes the covering of~$M$ associated
	to~$\Gamma_n$ for all~$n\in \N$ and~$\sv{\cdot}_\Z$ is the integral simplicial volume.
\end{cor}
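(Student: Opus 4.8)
The plan is to deduce the corollary from Corollary~\ref{cor:maincor} by comparing the integral foliated simplicial volume with respect to a Farber chain to the normalised integral simplicial volumes of the corresponding finite coverings. I would first reduce the first assertion to the ``more generally'' part: a residual chain is a Farber chain~\cite{abertnikolov} (the associated $\Gamma$-action introduced below is then even free), and every member~$\Gamma_n$ of a residual chain lies in~$F(\Gamma)$, so
\[
  0\le \stisv M = \inf_{\Lambda\in F(\Gamma)}\frac{\sv{\widetilde{M}/\Lambda}_\Z}{[\Gamma:\Lambda]}\le \inf_{n\in\N}\frac{\sv{M_n}_\Z}{[\Gamma:\Gamma_n]},
\]
and thus~$\stisv M=0$ as soon as the right-hand side vanishes. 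Hence it is enough to treat an arbitrary Farber chain~$(\Gamma_n)_{n\in\N}$ in~$\Gamma=\pi_1(M)$.

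Given such a chain, I would pass to the associated standard $\pi_1(M)$-space~$\alpha\colon \Gamma\curvearrowright(X,\mu)$, where~$X:=\varprojlim_n \Gamma/\Gamma_n$ carries the inverse limit~$\mu$ of the normalised counting measures on the finite $\Gamma$-sets~$\Gamma/\Gamma_n$ and~$\Gamma$ acts by left translations. This is a standard probability space, and by the defining property of a Farber chain~\cite{abertnikolov} the action~$\alpha$ is essentially free. Since~$M$ is aspherical and admits a non-trivial smooth $S^1$-action, Corollary~\ref{cor:maincor} applies to~$\alpha$ and gives~$\ifsv{M}^\alpha=0$.

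It remains to compare~$\ifsv{M}^\alpha$ with the finite-level quantities. On the one hand, for each~$n$ the canonical projection~$X\to\Gamma/\Gamma_n$ is $\Gamma$-equivariant and measure preserving, and pulling fundamental cycles back along the induced isometric embedding of parametrised chain complexes gives~$\ifsv{M}^\alpha\le \sv{M_n}_\Z/[\Gamma:\Gamma_n]$ for all~$n$, where one uses the chain isomorphism~$C_*(M;L^\infty(\Gamma/\Gamma_n;\Z))\cong C_*(M_n;\Z)$ under which the parametrised norm equals~$[\Gamma:\Gamma_n]^{-1}$ times the integral~$\ell^1$-norm. On the other hand, for the reverse inequality, which is the point where asphericity is used, I would invoke the approximation theorem of Frigerio, L\"oh, Pagliantini and Sauer for aspherical manifolds with residually finite fundamental group, which gives~$\liminf_{n\to\infty}\sv{M_n}_\Z/[\Gamma:\Gamma_n]\le\ifsv{M}^\alpha$ (and, in fact, that the limit exists and equals~$\ifsv{M}^\alpha$). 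Combining the two estimates yields~$\inf_{n\in\N}\sv{M_n}_\Z/[\Gamma:\Gamma_n]=\ifsv{M}^\alpha=0$, which together with the reduction in the first paragraph proves the corollary. I expect the only substantial obstacle to be the cited approximation theorem; its proof, which I would not reproduce, rounds an~$L^\infty(X;\Z)$-parametrised fundamental cycle of almost optimal norm into genuine integral fundamental cycles of the coverings~$M_n$, and the delicate point there is that a naive fibrewise rounding of the coefficients destroys the cycle condition, so asphericity (via a filling argument on the universal cover) is needed to repair it.
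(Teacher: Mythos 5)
Your proof is correct and follows the same route as the paper: apply Corollary~\ref{cor:maincor} to the standard $\Gamma$-space associated with the Farber chain to get~$\ifsv{M}^\alpha = 0$, and then transfer this to the normalised integral simplicial volumes of the finite covers~$M_n$ via the Frigerio--L\"oh--Pagliantini--Sauer comparison result (the paper simply cites this as~\cite[Theorem~2.6]{flps}). You give more detail than the paper's two-sentence proof, in particular the reduction of the first claim to the Farber-chain version and the easy inequality~$\ifsv{M}^\alpha \le \sv{M_n}_\Z/[\Gamma:\Gamma_n]$, neither of which is strictly necessary but both of which are correct. One small misattribution: the asphericity of~$M$ is used in Corollary~\ref{cor:maincor} (via L\"uck's structural result on $S^1$-actions on aspherical manifolds and Theorem~\ref{thm:mainthm}); the comparison between~$\ifsv{M}^\alpha$ for the profinite-type parametrisation and the normalised integral simplicial volumes of the covers is a general rounding argument that does not require asphericity. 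Since~$M$ is aspherical in the statement anyway, this does not affect the validity of your proof, only the bookkeeping of hypotheses.
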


\begin{proof}
	A Farber chain in a finitely generated group~$\Gamma$ yields a standard
	$\Gamma$-space that is essentially free~\cite[Section~3]{abertnikolov}.
	Now, the result follows from the relation between integral foliated simplicial volume and 
	stable integral simplicial volume~\cite[Theorem~2.6]{flps} and Corollary~\ref{cor:maincor}.
\end{proof}

Let $\tors A$ denote the torsion of a finitely generated abelian group~$A$ and~$\rk_R$
denote the $R$-dimension of the free part of finitely generated $R$-modules.
The \emph{rank gradient} of a residually finite group~$\Gamma$ (with respect to a Farber chain~$(\Gamma_n)_{n\in \N}$ in~$\Gamma$) is defined as
\[ \rg(\Gamma) := \inf_{\Lambda \in F(\Gamma)} \frac{d(\Lambda)-1}{[\Gamma:\Lambda]}
\quad \text{and} \quad
		\rg\bigl(\Gamma,(\Gamma_n)_{n\in \N}\bigr) := 
		\inf_{n\in N} \frac{d(\Gamma_n)-1}{[\Gamma:\Gamma_n]}
\]
respectively, where~$d(\cdot)$ denotes the minimal number of generators of a finitely generated group.

\begin{cor}[gradient invariants and $S^1$-actions]
\label{cor:app2}
	Let~$M$ be an aspherical oriented closed connected smooth manifold
	with residually finite fundamental group~$\Gamma$
	that admits a non-trivial smooth $S^1$-action.
	Let~$(\Gamma_n)_{n\in \N}$ be a Farber chain in~$\Gamma$ and let~$M_n \longrightarrow M$
	denote the covering of~$M$ associated to~$\Gamma_n$ for all~$n\in \N$.
	Then for all~$k\in \N$ and for every principal ideal domain~$R$, we have
	\begin{align*}
		\limsup_{n\rightarrow \infty} \frac{\rk_R H_k(M_n;R)}{[\Gamma:\Gamma_n]}=0
		\quad\text{and}\quad
		\limsup_{n\rightarrow \infty} \frac{\log |\tors H_k(M_n;\Z)|}{[\Gamma:\Gamma_n]}=0.
	\end{align*}
	Moreover, we have
	\[ \rg\bigl(\Gamma,(\Gamma_n)_{n\in \N}\bigr)=0;
	\]
	in particular, $\rg(\Gamma)=0$ and~$\mathrm{cost}(\Gamma) = 1$.
\end{cor}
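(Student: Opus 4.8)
The plan is to deduce this from the corresponding gradient statement for manifolds with vanishing integral foliated simplicial volume --- the analogue of the present corollary with the hypothesis ``admits a non-trivial smooth $S^1$-action'' replaced by ``$\ifsv{M}^\alpha=0$'', cf.~\cite{flps} --- for which Corollary~\ref{cor:maincor} supplies the hypothesis, and then to spell out the steps hidden in that reduction. The three quantities in the statement are each controlled by the integral simplicial volumes of the coverings~$M_n$, so everything comes down to the behaviour of $\sv{M_n}_\Z/[\Gamma:\Gamma_n]$ as $n\to\infty$. By the preceding corollary (on stable integral simplicial volume and $S^1$-actions) we have $\inf_{n\in\N}\sv{M_n}_\Z/[\Gamma:\Gamma_n]=0$, which will already suffice for the rank gradient; for the Betti number and torsion homology gradients, which are limits superior over the full sequence, I would additionally use that $\lim_{n\to\infty}\sv{M_n}_\Z/[\Gamma:\Gamma_n]=0$. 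For a residual chain this is transparent: pulling a fundamental cycle of~$M_n$ back along the finite covering $M_{n+1}\to M_n$ gives $\sv{M_{n+1}}_\Z\le[\Gamma_n:\Gamma_{n+1}]\cdot\sv{M_n}_\Z$, so the sequence is non-increasing and converges to its infimum~$0$; for a general Farber chain one has to invoke the full comparison between integral foliated simplicial volume and the sequence $\sv{M_n}_\Z/[\Gamma:\Gamma_n]$ from~\cite[Theorem~2.6]{flps}, applied to the essentially free standard $\Gamma$-space associated with the chain, together with Corollary~\ref{cor:maincor}. I expect this passage from the vanishing of the infimum to the vanishing of the limit superior to be the only point that is not purely formal; the rest is bookkeeping.

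The geometric inputs are the following bounds for an oriented closed connected $n$-manifold~$N$, with a constant~$c(n)$ depending only on~$n$: for every principal ideal domain~$R$ and every~$k\in\N$,
\begin{gather*}
  \rk_R H_k(N;R)\le\sv{N}_\Z,\qquad
  \log\bigl|\tors H_k(N;\Z)\bigr|\le c(n)\cdot\sv{N}_\Z,\\
  d\bigl(\pi_1(N)\bigr)\le c(n)\cdot\sv{N}_\Z.
\end{gather*}
These are essentially due to Gromov and are recorded in~\cite{flps}. For the first I would give the short argument: fixing an integral fundamental cycle~$c$ of~$N$ with $\sv{N}_\Z$ simplices, capping any cochain with~$c$ yields a chain supported on the back $k$-faces of the simplices of~$c$, hence lying in an $R$-submodule $V\subseteq C_k(N;R)$ of rank at most~$\sv{N}_\Z$; since on homology this cap product realises the Poincar\'e duality isomorphism $H^{n-k}(N;R)\to H_k(N;R)$, the group $H_k(N;R)$ is a subquotient of~$V$, so $\rk_R H_k(N;R)\le\sv{N}_\Z$. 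The torsion and generation bounds I would get from a finite simplicial complex homotopy equivalent to~$N$ with at most $c(n)\cdot\sv{N}_\Z$ simplices, built by thickening a minimal-norm fundamental cycle: its boundary operators are integer matrices with entries in~$\{-1,0,1\}$ and at most $n+1$ nonzero entries per column, so the Hadamard inequality bounds $\bigl|\tors H_k\bigr|$ by $(n+1)^{c(n)\sv{N}_\Z/2}$; and $\pi_1(N)$ is generated by the $1$-skeleton of the complex modulo a maximal tree, which carries at most $c(n)\cdot\sv{N}_\Z$ edges.

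Assembling: applying the first bound to $N=M_n$ and dividing by~$[\Gamma:\Gamma_n]$ gives $0\le\rk_R H_k(M_n;R)/[\Gamma:\Gamma_n]\le\sv{M_n}_\Z/[\Gamma:\Gamma_n]$ for every principal ideal domain~$R$, and since the right-hand side tends to~$0$ the limit superior of the middle term is~$0$; this is the first displayed limit of the corollary, and the same argument with the torsion bound gives the second one. For the rank gradient, the generation bound gives $\bigl(d(\Gamma_n)-1\bigr)/[\Gamma:\Gamma_n]\le c(n)\cdot\sv{M_n}_\Z/[\Gamma:\Gamma_n]$, so $\rg\bigl(\Gamma,(\Gamma_n)_{n\in\N}\bigr)=\inf_{n\in\N}\bigl(d(\Gamma_n)-1\bigr)/[\Gamma:\Gamma_n]\le c(n)\cdot\inf_{n\in\N}\sv{M_n}_\Z/[\Gamma:\Gamma_n]=0$ by the preceding corollary; and since each~$\Gamma_n$ is a finite-index subgroup of~$\Gamma$ and $d(\Lambda)\ge 1$ for every finite-index subgroup~$\Lambda\le\Gamma$, the infimum defining $\rg(\Gamma)$ lies between~$0$ and $\rg\bigl(\Gamma,(\Gamma_n)_{n\in\N}\bigr)=0$, whence $\rg(\Gamma)=0$.
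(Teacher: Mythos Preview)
Your proposal is correct and follows the same strategy as the paper, which simply cites \cite[Theorem~1.6]{flps} for the homology gradients and \cite[Theorem~1.1]{loeh} for the rank gradient; you have essentially unpacked the content of those two references. In particular, you correctly isolate the one non-formal point---passing from the vanishing of the infimum to the vanishing of the $\limsup$ along a general Farber chain---which the paper absorbs into the citation of~\cite{flps}.
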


\begin{proof}
	The first part of the corollary follows from a result on homology bounds by
	Frigerio, L\"oh, Pagliantini and Sauer~\cite[Theorem~1.6]{flps}.
	The second part follows from the fact that stable integral simplicial volume yields an upper
	bound for the rank gradient~\cite[Theorem~1.1]{loeh} 
	as well as for the cost~\cite[Theorem~1.2]{loehcost}.
\end{proof}

With similar methods as in the present work one can prove a variant of Theorem~\ref{thm:mainthm} 
for $S^1$-bundles~\cite[Proposition~4.2]{graphmfds}\cite[Theorem~4.3.1]{thesis}. 
This variant can be used to prove vanishing of the stable integral simplicial volume of 
generalised graph manifolds in the sense of Friedl, L\"oh and the 
author (e.g. non-spherical 3-dimensional graph manifolds)~\cite[Theorem~1.2]{graphmfds}.

\subsection*{Organisation of this article}

We briefly recall Yano's construction in Section~\ref{sec:setup}.
In Section~\ref{sec:ubc} we introduce relative integral foliated simplicial volume
and the parametrised uniform boundary condition for~$S^1$.
Additional prerequisites are provided in Section~\ref{sec:prep}.
In Section~\ref{sec:constructchain} we will construct
parametrised chains of small norm that we adjust
in Section~\ref{sec:proof} to get para\-metrised 
fundamental cycles of small norm.

\subsection*{Acknowledgements}

I would like to thank my advisor Clara L\"oh for her guidance
and all the helpful discussions and suggestions.


\section{Yano's construction}
\label{sec:setup}
In this section, we briefly recall the definitions and 
notations of Yano's construction~\cite[Section~2]{yano}\cite[Section~4.2]{thesis}
that he used for the proof of vanishing of the simplicial volume of 
smooth manifolds with non-trivial smooth $S^1$-action.

Let~$M$ be an oriented closed connected smooth $n$-manifold that
admits a smooth $S^1$-action without fixed points. 
Using so-called hollowings, Yano defines a sequence
\begin{align*}
	M_{n-2} \stackrel{p_{n-3}}{\longrightarrow} M_{n-3} \stackrel{p_{n-4}}{\longrightarrow} \dots \stackrel{p_1}{\longrightarrow} M_1 \stackrel{p_{0}}{\longrightarrow} M_0 = M
\end{align*}
of compact manifolds with smooth $S^1$-action and $S^1$-equivariant maps such that~$M_{n-2}$ 
splits as~$N\times S^1$ with~$N$ an oriented compact connected manifold (possibly) with boundary.
Note that the assumption on the $S^1$-action to be fixed point free allows us
to skip the first~$n-1$ steps in the sequence that Yano defined originally.

The idea of Yano's proof then is the following:
We know that the (relative) simplicial volume of~$M_{n-2}\cong N \times S^1$ is zero.
So we can choose a fundamental cycle of~$M_{n-2}$ of small $\ell^1$-norm.
The pushforward of this relative cycle to~$M$ unfortunately is in general 
no cycle in~$M$ any more, but one can adjust this pushforward by fillings to get 
a fundamental cycle of~$M$ without changing the norm too much.

One can easily generalise Yano's construction to compact manifolds with boundary by
allowing hollowings not just transversal to but also along the boundary
(which can be realised via doubling techniques~\cite[Section~B.3]{thesis}).

We come back to the sequence of manifolds above (where we allow~$M$ to be compact,
possibly with boundary):
First, we choose a triangulation on~$\overline M$ as follows:
For all~$r\in \N_{\geq 2}$ let~$L_r\subset M$ be the set of points whose stabilisers
contain the set
\[ \biggl\{0, \frac{1}{r},\dots, \frac{r-1}{r}\biggr\}
			\subset S^1 \cong \R/\Z
\]
and let~$L:= \bigcup_{r\in \N_{\geq 2}} L_r$. Without loss of generality we may assume that
the $S^1$-action is \emph{effective}, i.e., there exists no element in~$S^1$ that fixes every
point in~$M$. Then every~$L_r$ is a smooth submanifold of~$M$
of dimension less than~$n-1$ that admits a smooth $S^1$-action.
Let~$\pi\colon M\longrightarrow \overline M$ be the projection of~$M$ onto its orbit space.
Then,~$\overline M$ admits an equivariant triangulation~\cite[Corollary~3.8]{verona} 
such that for all~$r\in \N_{\geq 2}$ the orbit space~$\overline L_r$ of~$L_r$ is a subcomplex
of the triangulation.

Now, we want to extend the sequence by~$M_{-1}:=M$ and~$p_{-1}:= \id_M$
for notational reasons concerning the case with boundary.
We inductively define the map $p_j$ for all~$j\in \{0,\dots, n-3\}$ to be an 
\emph{equivariant hollowing
at~$X_j \subset M_j$}, i.e., $M_{j+1}$ is obtained from~$M_j$ by removing an equivariant open 
tubular neighbourhood~$T$ of~$X_j$ and glueing in a collar; then~$p_j\colon M_{j+1} \longrightarrow M_j$ is a projection that maps the collar to~$T$ in a way that the boundary of 
the collar is mapped to~$X_j$.
Here,~$X_j$ is the pullback of the $j$-skeleton of
the orbit space~$\overline M$ along~$\pi \circ p_0\circ \cdots \circ p_{j-1}$.
We write
\[ p_{l,l'}:=p_{l'}\circ \dots \circ p_{l-1}\colon M_l \longrightarrow M_{l'},
\]
for all~$l,l' \in \{ -1,0,\dots, n-2\}$ with~$l'< l$.
We set~$X_{-1}:=\partial M \subset M_{-1}$.
For all~$j\in \{-1, \dots n-3\}$ the \emph{hollow wall of~$p_j$} is given by
\[ N_j := p_j^{-1}(X_j) \subset M_{j+1}
\]
and~$\widetilde{N}_j\subset M_{n-2}$ is the pullback
of~$N_j$ along~$p_{n-2,j}$.
Let~$k\in \{1, \dots, n-1\}$ and let~$j_1, \dots, j_k \in \{-1, \dots, n-3\}$ be pairwise distinct.
We define
\[ \widetilde{N}_{j_1, \dots, j_k} := \widetilde{N}_{j_1} \cap \dots \cap \widetilde{N}_{j_k} \subset M_{n-2}
\]
and
\[ X_{j_1, \dots, j_k} := p_{n-2, j_1} (\widetilde{N}_{j_1, \dots, j_k}) \subset M_{j_1}.
\]
We write
\[ \overline N_{j_1, \dots, j_k} \quad \text{and} \quad \overline X_{j_1, \dots, j_k}
\]
for the orbit space of~$\widetilde{N}_{j_1, \dots, j_k}$ and~$X_{j_1, \dots, j_k}$ respectively.
Then, Yano shows the 
following~\cite[Lemma~4, Lemma~6 and Lemma~7]{yano}\cite[Lemma~4.2.3 and Lemma~4.2.4]{thesis}:

\begin{lem}
\label{lem:trivialbundle}
	Let~$k\in \{1, \dots, n-2\}$ and let~$j_1, \dots, j_k \in \{0, \dots, n-3\}$ be pairwise
	distinct. Then, each connected component of the orbit space~$\overline X_{j_1,\dots, j_k}$
	of~$X_{j_1, \dots, j_k}$ is contractible and we have
	\[ X_{j_1,\dots, j_k} \cong \overline X_{j_1,\dots, j_k} \times S^1
			\quad \text{and} \quad
		 M_{n-2} \cong \overline M_{n-2} \times S^1.
	\]
\end{lem}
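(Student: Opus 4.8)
The plan is to proceed by downward induction on $k$, starting from the top level $k = n-2$ and using the structure of the hollowing construction to descend. The key structural fact is that each hollowing $p_j$ is built from a tubular neighbourhood of the pullback $X_j$ of the $j$-skeleton of the triangulated orbit space $\overline M$, and that by our choice of triangulation all the relevant strata $\overline L_r$ are subcomplexes. Thus the intersection pattern of the hollow walls $\widetilde N_{j_1}, \dots, \widetilde N_{j_k}$ inside $M_{n-2}$ mirrors, via the projections $p_{n-2,\bullet}$, the incidence pattern of simplices in $\overline M$. I would first record that, away from the singular strata $L$, the $S^1$-action is free, so that $M_{n-2}$ and all the submanifolds $\widetilde N_{j_1,\dots,j_k}$ arising in the construction are contained in the free part and are genuine principal $S^1$-bundles over their orbit spaces; this already gives the fibrewise statement $M_{n-2} \cong \overline M_{n-2} \times S^1$ once we know the bundle is trivial, and similarly for $X_{j_1,\dots,j_k}$.

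The heart of the argument is then twofold: (i) showing that each connected component of $\overline X_{j_1,\dots,j_k}$ is contractible, and (ii) upgrading the principal $S^1$-bundle over a contractible base to a trivial (i.e.\ product) bundle. For (ii) this is immediate: a principal $S^1$-bundle over a contractible (paracompact) base is trivial, since $BS^1 = \mathbb{CP}^\infty$ and any map from a contractible space is null-homotopic. For (i), I would argue that $\overline X_{j_1,\dots,j_k}$ is, by construction, a union of (the interiors of suitably subdivided) simplices of the triangulation of $\overline M$: the pullback of the $j$-skeleton composed with the successive hollowings amounts, on the orbit-space level, to taking links and iterated boundary faces, and the pairwise distinct indices $j_1 < \dots < j_k$ pick out nested flags of faces. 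Each connected component of the resulting set deformation retracts onto a single simplex (or its barycentre), hence is contractible. Here I would lean on Yano's original Lemma~4, Lemma~6 and Lemma~7 as cited, and simply adapt the bookkeeping to also allow the boundary stratum $X_{-1} = \partial M$; since the lemma as stated restricts to indices $j_1,\dots,j_k \in \{0,\dots,n-3\}$, the boundary case does not even intervene here, so the combinatorics is exactly Yano's.

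The main obstacle I anticipate is the careful verification that the triangulation of $\overline M$ can be chosen fine enough, and compatible enough with the strata $\overline L_r$, that all the tubular neighbourhoods used in the hollowings can be taken mutually transverse and small, so that the orbit spaces of their intersections really are subcomplexes (or have the homotopy type of subcomplexes) with the claimed contractible components. This is a genericity/general-position argument in the triangulated orbit space, combined with the equivariant tubular neighbourhood theorem applied away from $L$; it is somewhat delicate but is precisely the content of the cited lemmas of Yano, so I would invoke those rather than redo the transversality from scratch. Once the component-wise contractibility is in hand, the bundle-triviality step and the assembly of the two displayed homeomorphisms are routine.
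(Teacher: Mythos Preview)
The paper does not supply its own proof of this lemma: it is stated with a direct citation to Yano~\cite[Lemma~4, Lemma~6 and Lemma~7]{yano}, and nothing further is argued in the present article. Your proposal likewise ultimately defers to exactly those lemmas of Yano for the combinatorial core (that the components of $\overline X_{j_1,\dots,j_k}$ are contractible, being modelled on iterated hollowings $\Delta^j_j$ of standard simplices), so your approach and the paper's are aligned. In that sense there is nothing to compare: you have written out an exposition of the strategy behind Yano's argument, which the paper simply cites.

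One small point worth tightening in your sketch: your triviality argument (ii) covers $X_{j_1,\dots,j_k}$ via contractibility of the base components, but you do not give a separate reason for the bundle $M_{n-2}\to\overline M_{n-2}$ to be trivial, since $\overline M_{n-2}$ is not asserted to be contractible. In Yano's argument the components of $\overline M_{n-2}$ are themselves hollowed-out top simplices $\Delta^{n-1}_{n-1}$ and hence contractible as well, so the same mechanism applies; you should make this explicit rather than leaving it implicit in the phrase ``once we know the bundle is trivial''.
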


For the case with boundary we need in addition the following observation:

\begin{prop}\label{prop:boundary}
For all pairwise distinct~$j_1, \dots, j_k \in \{0, \dots, n-3\}$
we have that~$X_{j_1, \dots, j_k, -1}$ is the union of
the connected components~$Y$ in $X_{j_1,\dots, j_k}$ with
\[ Y \subset p_{n-2, j_1} (\widetilde N_{-1}).
\]
\end{prop}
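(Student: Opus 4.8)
The plan is to track how the hollow wall $\widetilde N_{-1}$ over the boundary interacts with the other hollow walls. The key point is that hollowing at $X_{-1} = \partial M$ (the hollowing "along the boundary", in the language above) is a localised modification: it only alters the manifold near the boundary and leaves everything away from a collar untouched. First I would recall that $X_{j_1,\dots,j_k,-1} = p_{n-2,j_1}\bigl(\widetilde N_{j_1} \cap \dots \cap \widetilde N_{j_k} \cap \widetilde N_{-1}\bigr)$, so it is by definition contained in $X_{j_1,\dots,j_k} = p_{n-2,j_1}\bigl(\widetilde N_{j_1}\cap\dots\cap\widetilde N_{j_k}\bigr)$. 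Hence $X_{j_1,\dots,j_k,-1}$ is certainly a subset of $X_{j_1,\dots,j_k}$, and the content of the proposition is the identification of which part it is, namely a union of full connected components.

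The main step is to show that $\widetilde N_{-1}$ is, near each of the other walls, either disjoint from it or contains it — equivalently, that $X_{j_1,\dots,j_k}$ does not "straddle" $\widetilde N_{-1}$ across a single connected component. By Lemma~\ref{lem:trivialbundle} each connected component $Y$ of $X_{j_1,\dots,j_k}$ satisfies $Y \cong \overline Y \times S^1$ with $\overline Y$ contractible; since $\overline Y$ is connected and $\widetilde N_{-1}$ (coming from the boundary collar) is $S^1$-invariant and closed, $p_{n-2,j_1}^{-1}(Y)$ meets $\widetilde N_{-1}$ either in nothing or in a set whose image covers all of $Y$. More concretely: $\widetilde N_{-1}$ is the preimage of the boundary stratum, and the hollowing construction keeps the whole tower equivariant and respects the decomposition of $\overline M$ into the (open) interior part and the boundary part of the chosen triangulation of $\overline M$. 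Each $\overline X_{j_1,\dots,j_k}$-component, being contractible hence connected, lies either entirely over the boundary part or entirely over the interior part of $\overline M$; the components lying over the boundary are exactly those contained in $p_{n-2,j_1}(\widetilde N_{-1})$, and these constitute $X_{j_1,\dots,j_k,-1}$.

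I expect the main obstacle to be the bookkeeping showing that the collar used for the hollowing along $\partial M$ can be (and, in Yano's construction as extended above, is) chosen compatibly with all the later tubular neighbourhoods of $X_0,\dots,X_{n-3}$, so that $\widetilde N_{-1}$ is genuinely a union of fibres $\overline Y \times S^1$ and never cuts such a fibre in two. This is where one must use that $\overline{\partial M}$ was taken to be a subcomplex of the chosen triangulation of $\overline M$ (so it is a union of closed simplices, matching the skeleta defining the $X_j$), together with the fact that each $X_j$ is pulled back from a skeleton of $\overline M$; transversality and equivariance of the hollowings then guarantee that intersecting with $\widetilde N_{-1}$ respects connected components. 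Once this compatibility is in place, the statement follows by unwinding the definitions of $X_{j_1,\dots,j_k,-1}$ and $X_{j_1,\dots,j_k}$ and invoking contractibility of the orbit-space components from Lemma~\ref{lem:trivialbundle}.
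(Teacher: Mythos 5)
Your overall plan — reduce to the orbit-space picture, show that each component of $\overline X_{j_1,\dots,j_k}$ either lies inside $\overline p_{n-2,j_1}(\overline N_{-1})$ or is disjoint from it, and then conclude — is the same skeleton as the paper's proof. But there are two genuine gaps in the way you fill it in.

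First, the dichotomy ("either entirely over the boundary or entirely over the interior") does \emph{not} follow from connectedness or contractibility of $\overline Y$ alone. A connected set in $\overline M_{j_1}$ can perfectly well meet both $\overline p_{n-2,j_1}(\overline N_{-1})$ and its complement; you would need that $\overline p_{n-2,j_1}(\overline N_{-1}) \cap \overline Y$ is clopen in $\overline Y$, and "closed and $S^1$-invariant" does not give that. The ingredient the paper actually uses — and which your argument leaves out — is the explicit combinatorial model from Yano's proof of Lemma~\ref{lem:trivialbundle}: a component $Y$ of $\overline X_j$ is homeomorphic to the iterated hollowing $\Delta^j_j$ of a single $j$-simplex $\sigma$ of the chosen triangulation of $\overline M$. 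Because $\overline{\partial M}$ is a subcomplex of that triangulation, $\sigma$ is either a simplex of $\overline{\partial M}$ or its interior is disjoint from $\overline{\partial M}$, and the hollowing precisely deletes the shared lower-dimensional faces; this is what forces $Y$ into exactly one of the two cases. The "transversality and equivariance" you appeal to do not substitute for this simplicial model.

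Second, even granting the dichotomy, you still have to pass from $Y \subset \overline p_{n-2,j}(\overline N_{-1})$ to $Y \subset \overline X_{j,-1}$. Note that $\overline X_{j,-1} = \overline p_{n-2,j}(\overline N_j \cap \overline N_{-1})$, while the dichotomy only lands $Y$ in $\overline p_{n-2,j}(\overline N_j) \cap \overline p_{n-2,j}(\overline N_{-1})$; a priori the image of an intersection can be strictly smaller than the intersection of images, since $p_{n-2,j}$ is not injective. The paper closes this gap with the disjointness observation $\overline p_{n-2,j}(\overline N_j \setminus \overline N_{-1}) \cap \overline p_{n-2,j}(\overline N_{-1} \setminus \overline N_j) = \emptyset$. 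Your proposal never addresses this point; you simply assert that the components over the boundary "constitute" $X_{j_1,\dots,j_k,-1}$, which is the conclusion, not an argument for it.
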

\begin{proof}
	Let~$j\in \{0,\dots, n-3\}$.
	We only show the statement for~$X_{j,-1}\subset X_j$.
	The general case can be proven similarly.
	Let~$Y\subset \overline X_j$ be a connected component.
	As in Yano's proof of Lemma~\ref{lem:trivialbundle} we observe that~$Y$ is homeomorphic
	to~$\Delta^j_j$, where~$\Delta_j^j$ is obtained from the standard simplex~$\Delta^j$ by
	hollowing inductively along the l-skeleton for all~$l\in \{0,\dots, j-1\}$.
	From this it follows easily that we are in one of the following cases:
	\begin{enumerate}
		\item We have~$Y\subset \overline p_{n-2,j}(\overline N_{-1})$, or
		\item we have~$Y \cap \overline p_{n-2,j}(\overline N_{-1})= \emptyset$.
	\end{enumerate}
	In the first case, we have
	\begin{align*}
		Y &\subset \overline X_j \cap \overline p_{n-2,j}(\overline N_{-1})
		  = \overline p_{n-2,j}(\overline N_{j}) \cap \overline p_{n-2,j}(\overline N_{-1})\\
			&\subset \overline p_{n-2,j}(\overline N_{j,-1})
			= \overline X_{j,1},
	\end{align*}
	where the last inclusion follows from
	\[ \overline p_{n-2,j}(\overline N_{j}\setminus\overline N_{-1})\cap
				\overline p_{n-2,j}(\overline N_{-1}\setminus \overline N_{j}) = \emptyset
	\]
	which holds by construction~\cite[Lemma~4.2.8]{thesis}.
	In the second case, we have~$Y\cap \overline X_{j,-1}=\emptyset$.
\end{proof}

\begin{rem}
\label{rem:complex}
It follows that each~$\widetilde{N}_{j_1, \dots, j_k}$ decomposes
as~$\overline N_{j_1, \dots, j_k}\times S^1$. We choose a simplicial structure
on~$\partial \overline M_{n-2}$ that is compatible with the decompositions
\[ \partial \overline M_{n-2}= \bigcup_{i=-1}^{n-3} \overline N_i
		\quad \text{ and } \quad
	 \partial \overline N_{j_1,\dots, j_k} = \bigcup_{j} \overline N_{j_1,\dots, j_k,j},
\]
where~$j$ ranges over~$\{-1,\dots, n-3\}\setminus\{j_1,\dots, j_k\}$.
Then each~$\overline N_{j_1, \dots, j_k}$ is an $(n-2-k)$-dimensional subcomplex
of the $(n-2)$-dimensional complex~$\partial \overline M_{n-2}$.
\end{rem}


\section{Relative integral foliated simplicial volume and the parametrised uniform boundary condition}
\label{sec:ubc}


\subsection{The $\ell^1$-norm on the singular chain complex and simplicial volume}
We recall the definition of the relative simplicial volume introduced by Gromov~\cite{vbc}.

\begin{defi}[$\ell^1$-norm on the singular chain complex]
	Let~$R\in \{\Z,\R\}$. Let~$M$ be a topological space and let~$n\in \N$.
	For a singular chain~$c= \sum_{j=1}^k a_j \cdot \sigma_j \in C_n(M;R)$
	written in reduced form (i.e., the singular simplices~$\sigma_1, \dots, \sigma_k$
	are pairwise distinct) we define the \emph{$\ell^1$-norm of~$c$} by
	\[ |c|_1 := \sum_{j=1}^k |a_j|.
	\]
\end{defi}

\begin{rem}[functoriality]
\label{rem:functorial}
	The $\ell^1$-norm on the singular chain complex is \emph{functorial} in the following sense:
	Let~$f\colon M\longrightarrow N$ be a continuous map between topological spaces~$M$ and~$N$.
	Then~$\|C_n(f;R)\| \leq 1$, where~$\|\cdot\|$ denotes the operator norm.
\end{rem}

\begin{defi}[relative simplicial volume]
	Let~$M$ be an oriented compact connected $n$-manifold.
	A \emph{relative $\R$-fundamental cycle of~$M$} is a chain~$c\in C_n(M;\R)$ of the form
	\[ c = c_\Z + \partial b + d
	\]
	where $c_\Z \in C_n(M;\Z) \subset C_n(M;\R)$ is an ordinary relative fundamental cycle of~$M$,
	$b\in C_{n+1}(M;\R)$ and~$d\in C_{n}(\partial M;\R) \subset C_n(M;\R)$.
	In other words, $c$ is a cycle in~$C_n(M,\partial M;\R)$ representing~$[M, \partial M]_\R$.
	Then the \emph{relative simplicial volume of~$M$} is defined by
	\[ \|M,\partial M\| := 
				\inf \bigl\{ |c|_1 \bigm| c\in C_n(M,\R) \text{ represents } [M,\partial M]_\R \bigr\}.
	\]
	If~$\partial M = \emptyset$, we write~$\|M\|:=\|M,\partial M\|$.
\end{defi}


\subsection{The parametrised $\ell^1$-norm}

The parametrised $\ell^1$-norm is given as the $\ell^1$-norm on the singular chain complex with
twisted coefficients that are induced by actions of the fundamental group on probability spaces.
This leads to the (relative) integral foliated simplicial volume (Subsection~\ref{subsec:ifsv}).

\begin{defi}[standard $\Gamma$-space]
	Let~$\Gamma$ be a countable group.
	A \emph{standard $\Gamma$-space}~$\alpha = \Gamma \curvearrowright (Z,\mu)$ is a standard Borel probability 
	space~$(Z,\mu)$ together with a measurable probability measure preserving left-$\Gamma$-action.
\end{defi}

\begin{defi}[parametrised $\ell^1$-norm]
	Let~$M$ be a path-connected, locally path-connected topological space 
	that admits a universal covering space~$\ucov M$, let~$\Gamma:= \pi_1(M)$,
	and let~$\alpha = \Gamma \curvearrowright (Z;\mu)$ be a standard 
	$\Gamma$-space. For~$n\in \N$, we define the \emph{parametrised $\ell^1$-norm}
	\begin{align*}
		|\cdot|_1 \colon C_n (M;\alpha) & \longrightarrow \R_{\geq 0}\\
		\sum_{j=1}^k f_j \otimes \sigma_j & \longmapsto \sum_{j=1}^k \int_Z |f_j|~ d\mu
	\end{align*}
	on the chain complex
	\[ C_n(M;\alpha) := L^\infty (Z,\mu;\Z) \underset{\Z\Gamma}\otimes C_n(\ucov M;\Z),
	\]
	where we assume that~$\sum_{j=1}^k f_j \otimes \sigma_j$ is in reduced form, i.e.,
	all the singular simplices~$\sigma_j\in \map(\Delta^n, \ucov M)$ belong to
	different $\Gamma$-orbits.
	We consider the right-$\Gamma$-action on~$L^\infty (Z,\mu;\Z)$ given by
	\[ (f \cdot \gamma)(x) := f(\gamma \cdot x)
	\]
	for all~$f\in L^\infty (Z,\mu;\Z)$,~$\gamma\in \Gamma$ and~$x\in Z$
	and the left-$\Gamma$-action on~$C_n(\ucov M;\Z)$ induced by the deck
	transformation action of~$\Gamma$ on~$\ucov M$.
	In the following, we also write~$L^\infty (Z;\Z)$ or~$L^\infty(\alpha;\Z)$ 
	for~$L^\infty (Z,\mu;\Z)$.
\end{defi}


\subsection{Parametrised fundamental cycles}
\label{subsec:parfundcyc}

In this subsection we recall the definition of parametrised relative fundamental 
cycles~\cite[Section~10.1]{varubc} and introduce a local criterion for these cycles.

Let~$n\in \N$. Let~$M$ be an oriented compact connected $n$-manifold. We write~$\Gamma := \pi_1(M)$
and~$q\colon \ucov M \longrightarrow M$ for the universal covering of~$M$.
Let~$\alpha = \Gamma \curvearrowright (Z,\mu)$ be a standard $\Gamma$-space.
Since the $\Gamma$-action on~$\ucov M$ restricts to a $\Gamma$-action on~$q^{-1}(\partial M)$,
we can define
\[ D_* := L^\infty (Z;\Z) \underset{\Z\Gamma}{\otimes} C_*\bigl(q^{-1}(\partial M);\Z\bigr)
\]
as a subcomplex of~$C_*(M;\alpha)$. We set
\[ C_* (M,\partial M;\alpha) := C_*(M;\alpha)/D_*
\cong L^\infty (Z;\Z) \underset{\Z\Gamma}{\otimes} C_*\bigl(\widetilde M, q^{-1}(\partial M);\Z\bigr)
\]
and
\[ H_* (M,\partial M;\alpha) := H_* \bigl(C_*(M,\partial M;\alpha)\bigr).
\]

\begin{defi}[parametrised relative fundamental cycle]
	An \emph{$\alpha$-parametris\-ed relative fundamental cycle of~$M$}
	is a chain~$c\in C_n(M;\alpha)$ of the form
	\[ c = c_\Z + \partial b + d
	\]
	with a relative fundamental cycle~$c_\Z \in C_n(M;\Z) \subset C_n(M;\alpha)$,
	a chain~$b\in C_{n+1}(M;\alpha)$ and a chain~$d\in D_n$.
	In other words, $c$ is a cycle in~$C_n(M, \partial M ; \alpha)$
	representing~$[M,\partial M]^\alpha$, i.e., the image of~$[M,\partial M]$
	under the induced map of the inclusion
	\begin{align*}
		C_n(M,\partial M;\Z)
		\cong \Z \underset{\Z\Gamma}\otimes C_n\bigl(\widetilde M, q^{-1} (\partial M);\Z\bigr)
		&\longrightarrow
		C_*(\widetilde M,\partial M;\alpha)
		\\
		1\otimes \sigma &\longmapsto \const_1\otimes \sigma.
	\end{align*}
\end{defi}

\begin{lem}\label{lem:invariants}
	Let~$\Gamma$ be a countable group and let~$\alpha \colon \Gamma \curvearrowright (Z,\mu)$
	be a standard $\Gamma$-space. We write~$A:= L^{\infty}(Z, \mu;\Z)$. Then the canonical map
	$A^\Gamma \longrightarrow A_\Gamma$ from the~$\Gamma$-invariants to the $\Gamma$-coinvariants
	of~$A$ is injective.
\end{lem}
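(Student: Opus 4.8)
The plan is to exhibit, for each nonzero $\Gamma$-invariant $f \in A^\Gamma = L^\infty(Z,\mu;\Z)^\Gamma$, an explicit $\Z\Gamma$-linear functional on $A$ that detects $f$ in the coinvariants $A_\Gamma = \Z \otimes_{\Z\Gamma} A$. The natural candidate is integration against $\mu$. Since $\mu$ is $\Gamma$-invariant (the action is measure preserving) and $f$ takes integer values (it lies in $L^\infty(Z,\mu;\Z)$), the map
\[
\varphi \colon A \longrightarrow \Z, \qquad g \longmapsto \int_Z g \, d\mu
\]
is well defined on integer-valued essentially bounded functions, is additive, and satisfies $\varphi(\gamma \cdot g) = \varphi(g)$ for all $\gamma \in \Gamma$ because $\mu$ is $\Gamma$-invariant. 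Hence $\varphi$ factors through the coinvariants, giving $\overline{\varphi}\colon A_\Gamma \to \Z$ with $\overline{\varphi} \circ \pi = \varphi$, where $\pi \colon A \to A_\Gamma$ is the quotient map.

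Now I would argue that the composite $A^\Gamma \hookrightarrow A \xrightarrow{\pi} A_\Gamma \xrightarrow{\overline\varphi} \Z$ is already injective on $A^\Gamma$, which of course forces $\pi|_{A^\Gamma}$ to be injective. Given $f \in A^\Gamma$ with $\pi(f) = 0$, we get $\int_Z f \, d\mu = 0$. This alone is not enough; the extra input is that $f$ is $\Gamma$-invariant and $\Z$-valued, so the same computation applies to every truncation or to $f$ composed with an integer-valued function. Concretely: for each $k \in \Z$ the set $E_k := f^{-1}(k)$ is a $\Gamma$-invariant measurable set, so $\chi_{E_k} \in A^\Gamma$, and one computes that $\chi_{E_k}$ is annihilated by $\overline\varphi \circ \pi$ as well — indeed $\pi(\chi_{E_k})$ can be recovered from $\pi(f)$ by applying the (well-defined, because $E_k$ is $\Gamma$-invariant) idempotent "multiplication by $\chi_{E_k}$" operator on $A$, which descends to $A_\Gamma$. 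Thus $\pi(f) = 0$ implies $\pi(\chi_{E_k}) = 0$ for all $k$, hence $\mu(E_k) = \overline\varphi(\pi(\chi_{E_k})) = 0$ for all $k \neq 0$, and since $\sum_{k \neq 0} \mu(E_k) = \mu(\{f \neq 0\})$ we conclude $f = 0$ in $L^\infty(Z,\mu;\Z)$.

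The main subtlety to get right is the claim that multiplication by the indicator of a $\Gamma$-invariant set descends to the coinvariants: one must check that this operator commutes with the $\Gamma$-action on $A$ (which holds precisely because $E_k$ is $\Gamma$-invariant, so $\chi_{E_k} \cdot \gamma = \chi_{E_k}$ under the right action $(f\cdot\gamma)(x) = f(\gamma x)$), and hence carries the relations $g - g\cdot\gamma$ to relations and induces a well-defined endomorphism of $A_\Gamma$ sending $\pi(f)$ to $\pi(\chi_{E_k} f) = \pi(\chi_{E_k})$. Everything else is routine: measure-preservation gives $\Gamma$-invariance of $\varphi$, integrality of $f$ makes the integral land in $\Z$ (here one uses $\mu(Z) = 1 < \infty$ and $f \in L^\infty$ so the integral converges), and the decomposition of $\{f \neq 0\}$ into the countably many level sets $E_k$ is exactly the statement that $f$ is integer-valued.
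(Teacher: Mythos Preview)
Your approach is correct and matches the paper's: both exploit that integration against~$\mu$ over a $\Gamma$-invariant set descends to~$A_\Gamma$ and detects nonzero invariants. Two small slips to fix: $\varphi$ lands in~$\R$, not~$\Z$ (e.g.\ $\int_Z \chi_B \, d\mu = \mu(B)$ need not be an integer); and $\chi_{E_k} f = k\,\chi_{E_k}$, so from $\pi(f)=0$ you only get $k\,\pi(\chi_{E_k})=0$, hence $k\,\mu(E_k)=0$ after applying~$\overline\varphi$ --- still enough to conclude $\mu(E_k)=0$ for $k\neq 0$. The paper streamlines this slightly by integrating~$f$ over the single $\Gamma$-invariant set $B=\{f\geq 1\}$ (or $\{f\leq -1\}$) and observing $\bigl|\int_B f\,d\mu\bigr|\geq \mu(B)>0$ directly, avoiding the level-set decomposition.
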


\begin{proof}
	Let~$f\in A^\Gamma \setminus \{0\}$. We want to show that~$[f]\neq 0$ in~$A_\Gamma$.
	Since~$f$ is non-zero there exists a measurable subset~$B\subset X$ with~$\Gamma \cdot B = B$,
	$\mu(B)>0$, and
	\[ f|_B\geq 1 \quad \text{or} \quad f|_B\leq - 1.
	\]
	On the one hand, we have
	\[ \Bigl|\int_B f~d\mu\Bigr| \geq \mu(B) \neq 0.
	\]
	On the other hand, integration~$\int_B \cdot~d\mu \colon A^\Gamma\longrightarrow \R$
	over the $\Gamma$-invariant set~$B$ factors through~$A^\Gamma \longrightarrow A_\Gamma$
	since $\mu$ is $\Gamma$-invariant
	and it follows that~$[f]\neq 0$ in~$A_\Gamma$.
\end{proof}

\begin{defi}[local parametrised fundamental cycles]\label{def:local}
	Let~$M$ be an oriented compact connected $n$-manifold and let~$\Gamma := \pi_1(M)$.
	Let~$U\subset M^\circ$ be an open embedded $n$-ball in the interior of~$M$ 
	with~$\overline U \subset M^\circ$.
	Let~$\alpha\colon \Gamma \curvearrowright (Z,\mu)$ be a standard $\Gamma$-space.
	We write $A := L^\infty(Z,\mu;\Z)$.
	Let~$q\colon \widetilde M \longrightarrow M$ denote the universal covering of~$M$.
	Consider the following chain map
	\begin{align*} g\colon
			A\underset{\Z\Gamma}\otimes C_n\bigl(\widetilde M, q^{-1}(\partial M); \Z\bigr)
			\longrightarrow &A_\Gamma \underset{\Z\Gamma}\otimes 
					C_n\bigl(\widetilde M, q^{-1}(M \setminus U); \Z\bigr)
	\end{align*}
	induced by the inclusion
	\[ \bigl(\widetilde M, q^{-1}(\partial M)\bigr)
			\longrightarrow \bigl(\widetilde M, q^{-1}(M \setminus U)\bigr)
	\]
	and the change of coefficients map corresponding to~$A\longrightarrow A_{\Gamma}$.
	Here, the $\Gamma$-coinvariants $A_{\Gamma}$ of~$A$ are equipped
	with the trivial $\Gamma$-action.
	Rewriting~$g$ gives us a chain map
	\[ f\colon C_n(M,\partial M; \alpha) \longrightarrow 
			A_\Gamma \underset{\Z}\otimes C_n(M, M \setminus U; \Z).
	\]
	Let~$i\colon C_n(M;\alpha)\longrightarrow C_n(M,\partial M;\alpha)$ be the canonical map.
	
	Let~$c\in C_n(M;\alpha)$ be a relative cycle.
	Then~$c$ is called a \emph{$U$-local $\alpha$-parame\-trised relative fundamental cycle of~$M$} if
	\[ F\bigl([i(c)]\bigr) = \const_1 \in A_\Gamma
	    \cong H_n(M, M \setminus U; A_\Gamma),
	\]
	where~$F$ denotes the induced map of~$f$ in homology.
\end{defi}

\begin{prop}[locality of parametrised fundamental cycles]\label{prop:locality}
	Let~$M$ be an oriented compact connected $n$-manifold and let~$\Gamma := \pi_1(M)$.
	Let~$U\subset M^\circ$ be an open embedded $n$-ball with~$\overline U \subset M^\circ$.
	Let~$\alpha\colon \Gamma \curvearrowright (Z,\mu)$ be a standard $\Gamma$-space
	and let~$c\in C_n(M;\alpha)$ be a relative cycle.
	Then the following are equivalent:
	\begin{itemize}
		\item The relative cycle~$c$ is an $\alpha$-parametrised relative fundamental cycle of~$M$.
		\item The relative cycle~$c$ is a $U$-local
				$\alpha$-parametrised relative fundamental cycle of~$M$.
	\end{itemize}
\end{prop}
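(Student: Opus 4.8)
The plan is to compare the two notions of parametrised fundamental cycle using the long exact sequence of the triple together with the classical locality principle for ordinary fundamental cycles, lifted through the coefficient change $A \longrightarrow A_\Gamma$. Since $\alpha$-parametrised relative fundamental cycles of $M$ are by definition those relative cycles whose class in $H_n(M,\partial M;\alpha)$ is the image $[M,\partial M]^\alpha$ of the ordinary fundamental class, and $U$-local ones are those mapping to $\const_1$ under $F$, the statement amounts to showing that the composite
\[
H_n(M,\partial M;\alpha) \xrightarrow{\ F\ } A_\Gamma \cong H_n(M,M\setminus U;A_\Gamma)
\]
sends $[M,\partial M]^\alpha$ to $\const_1$, and, conversely, that the fibre of $F$ over $\const_1$ consists of a single class, namely $[M,\partial M]^\alpha$. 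The first of these (``parametrised implies $U$-local'') is a naturality check: the map $f$ of Definition~\ref{def:local} is built by functoriality from the inclusion of pairs and from $A \to A_\Gamma$, so it is compatible with the map $C_n(M,\partial M;\Z) \to C_n(M,\partial M;\alpha)$ sending $1\otimes\sigma$ to $\const_1\otimes\sigma$; since the ordinary fundamental class maps to the local fundamental class $1 \in \Z \cong H_n(M,M\setminus U;\Z)$ (this is the classical locality of fundamental cycles, e.g.\ via the commutative square relating $H_n(M,\partial M;\Z)$ and $H_n(M,M\setminus U;\Z)$), applying the change of coefficients $\Z \to A_\Gamma$, $1 \mapsto \const_1$, gives the claim.

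For the reverse implication I would show that $F$ is injective on the relevant part of $H_n(M,\partial M;\alpha)$, or at least that any two relative cycles hitting $\const_1$ are homologous in $C_n(M,\partial M;\alpha)$. The key algebraic input is Lemma~\ref{lem:invariants}: the canonical map $A^\Gamma \to A_\Gamma$ is injective. Concretely, I would identify $H_n(M,\partial M;\alpha)$ using the isomorphism $C_*(M,\partial M;\alpha)\cong L^\infty(Z;\Z)\otimes_{\Z\Gamma} C_*(\widetilde M, q^{-1}(\partial M);\Z)$ and the fact that $C_*(\widetilde M, q^{-1}(\partial M);\Z)$ computes $H_*(M,\partial M;\Z\Gamma)$ with the top class generating $H_n \cong \Z$ as a $\Z\Gamma$-module; tensoring with the flat (indeed, because $\Z\Gamma\to A$ need not be flat, one works at the level of the specific degenerate situation in top dimension where $H_{n+1}(\widetilde M, q^{-1}(\partial M);\Z)=0$ for aspherical or more simply by a direct chain-level argument) module $A$ then yields that $H_n(M,\partial M;\alpha)$ maps, via the orientation, to $A^\Gamma$, and the further coefficient change to $A_\Gamma$ is exactly the map of Lemma~\ref{lem:invariants}. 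Injectivity there forces the fibre of $F$ over $\const_1$ to be the single class $[M,\partial M]^\alpha$.

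I expect the main obstacle to be the bookkeeping in identifying $H_n(M,\partial M;\alpha)$ and the map $F$ with the algebraic map $A^\Gamma \to A_\Gamma$ of Lemma~\ref{lem:invariants}: one must handle the relative (boundary) terms $D_*$ carefully, check that excision applies to pass from $(M,\partial M)$ to $(M,M\setminus U)$ at the parametrised level, and verify that the top homology behaves as if $\Z\Gamma$ were being tensored into $A$ without spurious lower-degree contributions. Once the diagram
\[
\begin{array}{ccc}
H_n(M,\partial M;\Z) & \longrightarrow & H_n(M,\partial M;\alpha)\\
\downarrow & & \downarrow F\\
\Z & \longrightarrow & A_\Gamma
\end{array}
\]
is shown to commute with the bottom map $1\mapsto\const_1$ and the left map the standard local degree, and once injectivity of $A^\Gamma\to A_\Gamma$ is invoked to pin down preimages, both implications follow. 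The case $\partial M = \emptyset$ is the same argument with $D_* = 0$.
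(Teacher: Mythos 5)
Your high-level strategy matches the paper's: for ``parametrised implies $U$-local'' you use naturality and the fact that the ordinary fundamental class maps to $\const_1$ under the coefficient change $\Z\to A_\Gamma$; for the converse you reduce to injectivity of $F$ via Lemma~\ref{lem:invariants}, which is exactly the key algebraic input the paper uses. That part is fine.

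The gap is in your proposed identification of $H_n(M,\partial M;\alpha)$ with $A^\Gamma$. You try to compute it from $C_*(\widetilde M, q^{-1}(\partial M);\Z)$ by tensoring with $A$ over $\Z\Gamma$ and appealing to ``the top class generating $H_n\cong\Z$ as a $\Z\Gamma$-module.'' This does not work: when $\Gamma$ is infinite, $\widetilde M$ is a noncompact $n$-manifold, so $H_n(\widetilde M, q^{-1}(\partial M);\Z)=0$; there is no ``top class'' in homology to tensor. Moreover, $A=L^\infty(Z;\Z)$ is not flat over $\Z\Gamma$, so even if $H_{n+1}=0$ one cannot commute the tensor product past homology without controlling $\mathrm{Tor}$ terms from degrees below $n$, which the vanishing of $H_{n+1}$ alone does not do. Your fallback ``direct chain-level argument'' is where the actual content of the step lives and is left unsubstantiated. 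The paper avoids all of this by invoking Lefschetz (Poincar\'e) duality with twisted coefficients: $(M,\partial M)$ is a connected Poincar\'e pair, hence $H_n(M,\partial M;\alpha)\cong H^0(M;\alpha)\cong A^\Gamma$ directly, and under this identification $F$ becomes precisely the canonical map $A^\Gamma\to A_\Gamma$ of Lemma~\ref{lem:invariants}. You should replace your universal-coefficients/tensoring argument with this duality step; once that is in place, the rest of your outline goes through.
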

\begin{proof}
	We write $A := L^\infty(Z,\mu;\Z)$.
	Let
	\[ f\colon C_n(M,\partial M; \alpha) \longrightarrow 
			A_\Gamma \underset{\Z}\otimes C_n(M, M \setminus U; \Z)
	\]
	be given as in Definition~\ref{def:local} and let
	\[i\colon C_n(M;\alpha)\longrightarrow C_n(M,\partial M;\alpha)
	\]
	be the canonical map.
	
	By Lefschetz duality with twisted coefficients, we have
	\[ H_n(M, \partial M;\alpha) \cong H^0(M;\alpha) \cong A^\Gamma,
	\]
	where~$A^\Gamma$ denotes the~$\Gamma$-invariants in~$A$ (the pair~$(M,\partial M)$
	is a connected simple Poincar\'e pair~\cite[Theorem~2.1]{wall} and connected Poincar\'e
	pairs satisfy Lefschetz duality with twisted coefficients~\cite[Lemma~1.2]{wallpc}).
	
	It follows that in homology the map~$F := f_*$ is injective, since it induces the
	canonical projection~$A^\Gamma \longrightarrow A_\Gamma$,
	which is injective by Lemma~\ref{lem:invariants}.
	Furthermore, note that
	\[ F\bigl([M,\partial M]^\alpha\bigr) = \const_1 \in A_\Gamma.
	\]
	Since~$F$ is injective, it follows that~$i(c)$ represents~$[M,\partial M]^\alpha$ if and only
	if~$F\bigl([i(c)]\bigr)=\const_1$. This finishes the proof.
\end{proof}


\subsection{The relative integral foliated simplicial volume}
\label{subsec:ifsv}
We will now introduce (relative) integral foliated simplicial volume.

\begin{defi}[(relative) integral foliated simplicial volume]
	Let~$M$ be an oriented compact connected manifold.
	Then the \emph{(relative) integral foliated simplicial volume of~$M$} is defined by
	\begin{align*}
		\ifsv{M, \partial M} := \inf \bigl\{\ifsv{M,\partial M}^\alpha \bigm| \alpha = \Gamma \curvearrowright (Z,\mu) \text{ is a standard $\Gamma$-space}\bigr\},
	\end{align*}
	where~$\ifsv{M,\partial M}^\alpha 
						:= \inf \bigl\{ |c|_1 \bigm| c \text{ represents } [M,\partial M]^\alpha\bigr\}$.
	If~$\partial M = \emptyset$, we write~$\ifsv{M}^\alpha := \ifsv{M,\partial M}^\alpha$
	and~$\ifsv M := \ifsv{M,\partial M}$.
\end{defi}

The integral foliated simplicial volume yields an upper bound for the $L^2$-Betti numbers; more precisely
\[ \sum_{k=0}^n b_k^{(2)}(\ucov M) \leq (n+1) \cdot \ifsv M
\]
holds~\cite{schmidt} (the original constant $2^{n+1}$ can easily be improved to~$n+1$).
Furthermore, it fits into the following sandwich~\cite{schmidt}
\[ \|M\|\leq \ifsv M \leq \|M\|_\Z,
\]
where~$\|M\|_\Z$ is the \emph{integral simplicial volume} which is given by the
minimal $\ell^1$-norm of integral fundamental cycles of~$M$.
The integral foliated simplicial volume is known to be equal to the simplicial volume
in the case of oriented closed connected hyperbolic 3-manifolds~\cite{loehpagliantini}.
However, it is strictly greater than the simplicial volume for oriented closed connected
hyperbolic $k$-manifolds with~$k\in \N_{\geq 4}$~\cite[Theorem~1.8]{flps}.
Moreover, the integral foliated simplicial volume of oriented closed connected
aspherical manifolds with amenable fundamental group is zero~\cite[Theorem~1.9]{flps}.


\subsection{Normed chain complexes and the parametrised uniform boundary condition for~$S^1$}
	In the following we discuss our main tool for the proof of Theorem~\ref{thm:mainthm},
	the parametrised uniform boundary condition for~$S^1$.

\begin{defi}[(semi-)normed abelian groups]
	Let~$A$ be an abelian group.
	\begin{itemize}
		\item A \emph{semi-norm on~$A$} is a map~$|\cdot|\colon A \longrightarrow \R_{\geq 0}$ with
					the following properties:
			\begin{itemize}
				\item We have~$|0|=0$.
				\item For all~$a\in A$ we have~$|-a|=|a|$.
				\item For all~$a,b\in A$ we have~$|a+b|\leq |a|+|b|$.
			\end{itemize}
		\item A \emph{norm on~$A$} is a semi-norm~$|\cdot|$ on~$A$ such that for all~$a\in A$ we
					have~$|a|=0$ if and only if~$a=0$.
		\item A \emph{(semi-)normed abelian group} is an abelian group equipped with a (semi-)norm.
		\item A homomorphism~$f\colon A\longrightarrow B$ of (semi-)normed abelian groups is called
					\emph{bounded} if there exists a constant~$C\in \R_{>0}$ such that for all~$a\in A$
					we have
					\[ |f(a)|\leq C\cdot |a|.
					\]
	\end{itemize}
\end{defi}

\begin{defi}[normed chain complex]
	A \emph{normed chain complex} is a chain complex in the category of normed abelian
	groups and bounded homomorphisms.
\end{defi}

\begin{exa}
	The singular chain complex together with the $\ell^1$-norm as well as the singular chain complex
	with twisted coefficients together with the parametrised $\ell^1$-norm are normed chain 
	complexes.
\end{exa}

\begin{defi}[UBC]
	Let~$n\in \N$. A normed chain complex~$C_*$ satisfies the \emph{uniform boundary condition in
	degree~$n$},	short $n$-UBC, if there exist a constant~$C\in \R_{>0}$ such that for every 
	null-homologous cycle~$c\in C_n$ there exists an efficient filling, i.e., a
	chain~$b\in C_{n+1}$ with~$\partial b=c$ and~$|b|\leq C\cdot |c|$.
\end{defi}

\begin{prop}[UBC and homotopy]\label{prop:ubc}
	Let~$f_*\colon C_*\longrightarrow D_*$ be a chain homotopy equivalence of normed chain complexes with chain homotopy inverse~$g_*$ and chain homotopies~$h^C_*$ from~$\id_C$ to~$g\circ f$ and~$h^D_*$ from~$\id_D$ to~$f\circ g$. If~$f_n, g_{n+1}$ and~$h^C_n$ are bounded for some~$n\in \N$, then the following holds:
	If~$D_*$ satisfies $n$-UBC, then so does~$C_*$.
\end{prop}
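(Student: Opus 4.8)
The plan is to transport an efficient filling from $D_*$ back to $C_*$ using the chain homotopy equivalence, keeping track of norms. Suppose $D_*$ satisfies $n$-UBC with constant $C_D$. Let $c\in C_n$ be a null-homologous cycle. First I would push it forward: $f_n(c)\in D_n$ is again a cycle (chain maps commute with $\partial$), and it is null-homologous in $D_*$ because $f_*$ is a chain homotopy equivalence (alternatively, $c=\partial a$ for some $a$ forces $f_n(c)=\partial f_{n+1}(a)$, but even without such an $a$ the equivalence $f_*$ induces an isomorphism on homology, so $[f_n(c)]=0$). Hence by $n$-UBC in $D_*$ there is $b'\in D_{n+1}$ with $\partial b' = f_n(c)$ and $|b'|\leq C_D\cdot|f_n(c)|\leq C_D\cdot\|f_n\|\cdot|c|$, using that $f_n$ is bounded.

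Next I would pull this back and correct it with the homotopy. Set $b := g_{n+1}(b') + h^C_n(c)\in C_{n+1}$. Then
\[
  \partial b = g_n(\partial b') + \partial h^C_n(c)
             = g_n\bigl(f_n(c)\bigr) + \partial h^C_n(c).
\]
Since $h^C_*$ is a chain homotopy from $\id_C$ to $g\circ f$, we have $\partial h^C_n + h^C_{n-1}\partial = \id - g\circ f$ in degree $n$, and $\partial c = 0$, so $\partial h^C_n(c) = c - g_n(f_n(c))$. Substituting gives $\partial b = c$, so $b$ is a filling of $c$.

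Finally I would estimate the norm of $b$. By the triangle inequality and boundedness of $g_{n+1}$ and $h^C_n$,
\[
  |b| \leq \|g_{n+1}\|\cdot|b'| + \|h^C_n\|\cdot|c|
      \leq \bigl(\|g_{n+1}\|\cdot C_D\cdot\|f_n\| + \|h^C_n\|\bigr)\cdot|c|,
\]
so $C_*$ satisfies $n$-UBC with constant $\|g_{n+1}\|\cdot C_D\cdot\|f_n\| + \|h^C_n\|$, which depends only on the fixed data $f_*, g_*, h^C_*$ and $C_D$, not on $c$. The only genuinely subtle point is the very first one: verifying that $f_n(c)$ is null-homologous in $D_*$. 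If one already knows $c=\partial a$, this is immediate; in general it follows from the fact that a chain homotopy equivalence induces isomorphisms on homology, so this is not really an obstacle, merely a step to state carefully. Everything else is bookkeeping with operator norms.
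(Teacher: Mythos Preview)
Your proof is correct and follows essentially the same route as the paper: push the null-homologous cycle to $D_*$ via $f_n$, use $n$-UBC there to find an efficient filling $b'$, pull back via $g_{n+1}$, and correct with the homotopy $h^C_n$ to obtain a filling of $c$ with controlled norm. The only cosmetic difference is a sign convention for the chain homotopy (the paper takes $\partial h^C + h^C\partial = g\circ f - \id$ and accordingly sets $b = g_{n+1}(b') - h^C_n(c)$), and you spell out a bit more explicitly why $f_n(c)$ is null-homologous and what the resulting UBC constant is.
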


\begin{proof}
	Let~$n\in N$ and let~$K_n$ be the maximum of the bounds of~$f_n, g_{n+1}, h^C_n$ and the $n$-UBC constant of~$D_*$. Let~$c\in C_n$ be a null-homologous cycle. Then~$f_n(c)\in D_n$ is a null-homologous cycle and therefore there exists an efficient filling~$b'\in D_{n+1}$ of~$c$.
	We set~$b:= g_{n+1}(b') - h_n^C(c)$. Then we have
	\begin{align*}
		\partial b = \partial g_{n+1} (b')- \partial h^C_n (c)
								=g_n\circ f_n (c) + h^C_{n-1} (\partial c) - g_n\circ f_n (c) + c=c.
	\end{align*}
	and
	\[ |b|\leq K_n^3\cdot |c| + K_n \cdot |c|.
	\]
	Hence, the chain~$b$ is an efficient filling of~$c$.
\end{proof}

In our proof of Theorem~\ref{thm:mainthm} (Section~\ref{sec:proof}),
the following result will play an important role.
It is a special case of the parametrised uniform boundary condition
for tori~\cite[Theorem~1.3]{varubc}.

\begin{thm}[parametrised UBC for~$S^1$]
\label{thm:ubc}
	Let $\Gamma := \pi_1(S^1) \cong \Z$, and 
  let~$\alpha=\Gamma \curvearrowright (Z,\mu)$ be an essentially free standard $\Gamma$-space.
	Then~$C_*(S^1;\alpha)$
	satisfies the uniform boundary condition in every degree.
\end{thm}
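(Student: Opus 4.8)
The plan is to compute $C_*(S^1;\alpha)$ via a small, combinatorially explicit model and then reduce to an estimate about $\Z[\Z]$-chain complexes with the $L^1$-norm coming from $\alpha$. Concretely, I would equip $S^1$ with its minimal CW structure (one $0$-cell, one $1$-cell), so that the cellular chain complex of the universal cover $\ucov{S^1} \cong \R$ is the complex of $\Z\Gamma$-modules $\cdots \to 0 \to \Z\Gamma \xrightarrow{\; t-1\;} \Z\Gamma \to 0 \to \cdots$ concentrated in degrees $1$ and $0$, where $t$ is a generator of $\Gamma \cong \Z$. Tensoring with $A := L^\infty(Z,\mu;\Z)$ over $\Z\Gamma$ gives the two-term complex $A \xrightarrow{\,t-1\,} A$ in degrees $1,0$, and the parametrised $\ell^1$-norm on it is just the $L^1(Z,\mu)$-norm of an element of $A$ (each of whose entries is $\Z$-valued and hence bounded, so everything is genuinely $L^1$). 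Since the cellular chain complex is $\Z\Gamma$-chain homotopy equivalent to the singular one via $\Gamma$-equivariant maps that are bounded in each degree (the subdivision/approximation maps have degree-wise bounded operator norm), Proposition~\ref{prop:ubc} lets me transport $n$-UBC from the cellular model to $C_*(S^1;\alpha)$ in every degree. So it suffices to prove UBC for the two-term complex $A \xrightarrow{\,t-1\,} A$.

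The UBC statement for this model is trivial in all degrees except the one that matters: in degrees $\geq 2$ the complex is zero; in degree $0$ a null-homologous cycle is an element of the image of $t-1$, but I must exhibit a filling whose norm is controlled by the norm of the cycle — this is exactly the content of $0$-UBC and is where essential freeness enters. In degree $1$ a null-homologous cycle is a $1$-cycle, but $C_2 = 0$, so I must show the only null-homologous $1$-cycle is $0$, i.e.\ that $t-1 \colon A \to A$ is injective; this again uses essential freeness (if $(t-1)f = 0$ then $f$ is $\Gamma$-invariant, hence a.e.\ constant on ergodic components, and on an essentially free action the orbit of a.e.\ point is infinite so a nonzero constant would force infinite measure — more carefully, $f$ being $\Gamma$-invariant and $\Z$-valued on a standard probability space with essentially free, hence aperiodic, action forces $f = 0$). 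The real work is the norm estimate for $0$-UBC: given $g = (t-1)f \in A$, produce $f'$ with $(t-1)f' = g$ and $\|f'\|_1 \leq C\|g\|_1$ with $C$ independent of $g$.

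For the $0$-UBC estimate I would invoke the special case of~\cite[Theorem~1.3]{varubc} that is quoted as giving the parametrised UBC for tori — but since the problem allows me only to assume results \emph{stated earlier in the excerpt}, and Theorem~\ref{thm:ubc} is itself that special case, I should instead give the argument directly. The key device is a Rokhlin-type tower: by the Rokhlin lemma for the essentially free (hence aperiodic) $\Z$-action $\alpha$, for each $N$ there is a measurable set $B$ with $B, t B, \dots, t^{N-1}B$ disjoint and $\mu(Z \setminus \bigsqcup_{j=0}^{N-1} t^j B)$ small. On such a tower one can "integrate" a coboundary: writing $g = (t-1)f$ and telescoping along each column of the tower, one defines $f'$ columnwise by partial sums of $g$, with the column length $N$ chosen so that the resulting $f'$ has $\|f'\|_1 \leq C \|g\|_1$ for an absolute constant $C$ (heuristically $C$ of size $O(N)$ on the tower, but averaging over the $N$ possible "cut points" in each column brings the constant down to something bounded independent of $N$, hence one can let the error set shrink to measure zero and pass to a limit). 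The main obstacle is precisely making this averaging-over-cut-points argument rigorous so that the UBC constant does not blow up as the Rokhlin tower gets tall, and handling the small leftover set without spoiling the bound; this is the heart of the matter and is exactly the point at which \cite{varubc} does the serious work, so in the paper I would either reproduce that averaging estimate or — cleaner — simply cite Theorem~1.3 of~\cite{varubc} for tori and specialise to $S^1$, noting that the cellular reduction above shows every degree is covered.
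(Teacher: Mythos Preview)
The paper does not prove Theorem~\ref{thm:ubc}: it is stated as a special case of~\cite[Theorem~1.3]{varubc} and simply cited. Your concluding recommendation---cite that theorem and specialise to~$S^1$---is exactly what the paper does.

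Regarding your independent sketch: the degree~$1$ step is confused. You write that you must show ``the only null-homologous $1$-cycle is~$0$, i.e.\ that $t-1\colon A\to A$ is injective.'' The first clause is automatic---since $C_2=0$ in the cellular model, the only boundary in degree~$1$ is~$0$, so $1$-UBC holds trivially with any constant. The second clause is both unnecessary and false: the kernel of $t-1$ on $A=L^\infty(Z,\mu;\Z)$ is the space of $\Gamma$-invariant functions, which always contains the constants, and essential freeness does not change this. Your remark that a nonzero invariant function ``would force infinite measure'' is simply wrong on a probability space. None of this harms the UBC conclusion, but the injectivity claim should be dropped.

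In degree~$0$ your Rokhlin-tower heuristic is on the right track and is indeed close in spirit to how such estimates are obtained, but the averaging argument that keeps the constant uniform as the tower height grows is the genuine content, and you have not supplied it. If you pursue the cellular reduction via Proposition~\ref{prop:ubc}, note also that you need the singular-to-cellular map and the self-homotopy on the singular side to be bounded degreewise; this is true but deserves a word of justification rather than an appeal to ``subdivision/approximation maps.''
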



\section{Yano's construction in the parametrised world}
\label{sec:prep}

In addition to Yano's setup (Section~\ref{sec:setup}), we need some technical prerequisites that we cover in the following section.

Let~$M$ be an oriented compact connected smooth $n$-mani\-fold that
admits a smooth $S^1$-action without fixed points and such that the inclusion of every
orbit is $\pi_1$-injective. We write~$\Gamma:= \pi_1(M)$.

\begin{prop}
\label{prop:pioneinjective}
	Let~$k\in \{1, \dots, n-2\}$ and let~$j_1, \dots, j_k \in \{0, \dots, n-3\}$ be pairwise distinct.
	Then, the inclusions~$X_{j_1, \dots, j_k} \subset M_{j_1}$
	and~$X_{j_1, \dots, j_k, -1} \subset M_{j_1}$ are $\pi_1$-injective
	(for any chosen basepoints).
\end{prop}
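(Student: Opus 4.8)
The plan is to reduce everything to the product structure provided by Lemma~\ref{lem:trivialbundle} together with the hypothesis that orbit inclusions are $\pi_1$-injective. By Lemma~\ref{lem:trivialbundle}, for pairwise distinct $j_1,\dots,j_k\in\{0,\dots,n-3\}$ we have a homeomorphism $X_{j_1,\dots,j_k}\cong \overline X_{j_1,\dots,j_k}\times S^1$ with each connected component of $\overline X_{j_1,\dots,j_k}$ contractible, and also $M_{j_1}$ carries an $S^1$-action whose orbits are exactly the $S^1$-factors under this identification (these $S^1$-orbits being the orbits of the $S^1$-action on $M_{j_1}$ coming down from $M_{n-2}$). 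So each component of $X_{j_1,\dots,j_k}$ is an embedded $S^1$-orbit copy (up to homotopy equivalence, since the base is contractible), and its inclusion into $M_{j_1}$ is, up to homotopy, the inclusion of an $S^1$-orbit of $M_{j_1}$. The first step is therefore to verify that the $\pi_1$-injectivity hypothesis on orbits in $M = M_0$ propagates along the hollowing maps $p_j$, i.e. that orbit inclusions into each $M_{j_1}$ are still $\pi_1$-injective; this should follow because each $p_{j_1,0}\colon M_{j_1}\to M_0$ restricts to a homotopy equivalence (indeed a deformation retract) on a neighbourhood of any orbit onto the corresponding orbit in $M$, so that the orbit inclusion into $M_{j_1}$ composed with $p_{j_1,0}$ agrees up to homotopy with an orbit inclusion into $M$ — and the hollowing maps are compatible with the $S^1$-actions, so orbits map to orbits.

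Granting that, the inclusion $X_{j_1,\dots,j_k}\subset M_{j_1}$ is $\pi_1$-injective on each path component: a component is homotopy equivalent to an $S^1$-orbit and the inclusion is homotopic (via $p_{j_1,0}$, or directly) to that orbit's inclusion into $M_{j_1}$, which is $\pi_1$-injective by the previous paragraph. For the statement about $X_{j_1,\dots,j_k,-1}$, I would invoke Proposition~\ref{prop:boundary}: that proposition identifies $X_{j_1,\dots,j_k,-1}$ as a union of connected components of $X_{j_1,\dots,j_k}$. Since $\pi_1$-injectivity of a subspace inclusion is a property that only concerns path components, and $X_{j_1,\dots,j_k,-1}$ is a union of some of the components of the already-handled $X_{j_1,\dots,j_k}$, the $\pi_1$-injectivity of $X_{j_1,\dots,j_k,-1}\subset M_{j_1}$ is immediate from the case just proved.

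The main obstacle I anticipate is the bookkeeping in the first step: making precise that the hollowing map $p_{j_1,0}\colon M_{j_1}\to M_0$ is $S^1$-equivariant (this is asserted in Section~\ref{sec:setup}), that it carries orbits to orbits, and that it restricts to a homotopy equivalence near each orbit, so that the relevant triangle of maps on $\pi_1$ commutes up to the identification needed. One has to be slightly careful that the hollowing removes a tubular neighbourhood of a lower-dimensional invariant submanifold $X_j$, so an orbit of $M_{j_1}$ that lies over an orbit of $M$ meeting $X_j$ could a priori behave differently; but the components of $X_{j_1,\dots,j_k}$ are exactly the orbits that survive all the hollowings (they sit inside $M_{n-2}$ via $\widetilde N_{j_1,\dots,j_k}$), so for those the retraction argument applies cleanly. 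Once the equivariance and the local homotopy-equivalence property of $p_{j_1,0}$ are pinned down, the rest is formal.
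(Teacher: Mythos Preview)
Your approach is correct and is essentially the paper's: reduce the $-1$ case to the main case via Proposition~\ref{prop:boundary}, use Lemma~\ref{lem:trivialbundle} to identify each component of $X_{j_1,\dots,j_k}$ with an $S^1$-orbit up to homotopy, and then observe that the composition $X_{j_1,\dots,j_k}\hookrightarrow M_{j_1}\xrightarrow{p_{j_1,0}} M$ is an orbit inclusion into $M$, hence $\pi_1$-injective, forcing the first map to be $\pi_1$-injective. The paper does this in one step rather than your two, so your ``main obstacle'' disappears: you never need to analyse $p_{j_1,0}$ locally or propagate injectivity to $M_{j_1}$ --- injectivity of the composite on $\pi_1$ immediately gives injectivity of the first factor.
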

\begin{proof}
	By Proposition~\ref{prop:boundary}, it is enough to show that~$X_{j_1, \dots, j_k}
	\subset M_{j_1}$ is $\pi_1$-injective.
	By Lemma~\ref{lem:trivialbundle} we have
	\[ X_{j_1,\dots, j_k} \cong \overline X_{j_1,\dots, j_k} \times S^1
	\]
	and each component of~$\overline X_{j_1,\dots, j_k}$ is contractible.
	With this in mind, observe that the composition of maps
	\[ X_{j_1,\dots, j_k} \subset M_{j_1} \stackrel{p_{j_1,0}}{\longrightarrow} M
	\]
	is the inclusion of~$S^1$-orbits into~$M$ and thus $\pi_1$-injective.
	Hence, the inclusion~$X_{j_1,\dots, j_k} \subset M_{j_1}$ is also $\pi_1$-injective.
\end{proof}

\begin{setup}
\label{setup}
Let~$\alpha\colon \Gamma \curvearrowright (Z,\mu)$ be an essentially free
standard $\Gamma$-space. We define a sequence
\begin{align*}
	L^\infty(\alpha_{n-2},\Z) \underset{\Z \Gamma_{n-2}}{\otimes} C_*( \ucov M_{n-2};\Z)
	\stackrel{P_{n-3}}{\longrightarrow} \dots \stackrel{P_{0}}{\longrightarrow}
	L^\infty(\alpha_{0},\Z) \underset{\Z \Gamma_{0}}{\otimes} C_*( \ucov M_{0};\Z)
\end{align*}
of chain maps:
We write~$\alpha_0 := \alpha$, $\alpha_{-1} := \alpha$ and~$\Gamma_j := \pi_1(M_j)$.
For all~$j\in \{1, \dots, n-2\}$ let~$\alpha_j \colon \Gamma_j \curvearrowright (Z,\mu)$
be the $\Gamma_j$-space obtained by restricting~$\alpha$ along~$\pi_1(p_{j,0})$, i.e.,
we consider the $\Gamma_j$-action on~$Z$ given by
\[ \gamma \cdot z := \pi_1(p_{j,0}) (\gamma) \cdot z.
\]
Let~$x_0\in M_{n-2}$. Then, for all~$j\in\{0, \dots, n-3\}$ the map~$p_j$ induces a homomorphism
\begin{align*}
	P_j \colon L^\infty(\alpha_{j+1},\Z) \underset{\Z \Gamma_{j+1}}{\otimes} C_*( \ucov M_{j+1};\Z)
	&\longrightarrow
	L^\infty(\alpha_{j},\Z) \underset{\Z \Gamma_{j}}{\otimes} C_*( \ucov M_{j};\Z),\\
	f \otimes \sigma & \longmapsto f \otimes \ucov p_j \circ \sigma
\end{align*}
where~$\ucov p_j$ denotes the lift of~$p_j$ with respect to the base point~$p_{n-2,j}(x_0)$.
Observe that~$\|P_j\|\leq 1$ holds for all~$j\in\{0, \dots, n-3\}$ with respect to the parametrised~$\ell^1$-norm.
For all~$j,j'\in \{ 0, \dots, n-2\}$ with~$j'<j$ we write
\[ P_{j,j'} := P_{j'} \circ \dots \circ P_{j-1}\colon 
		C_*(M_j,\alpha_j) \longrightarrow C_*(M_{j'},\alpha_{j'}).
\]

For all~$j\in \{0, \dots, n-2\}$ let~$q_j \colon \ucov M_j \longrightarrow M_j$ denote the universal covering of~$M_j$.
Since~$q_j^{-1}(X_j)$ is closed under the $\Gamma_j$-action on~$\ucov M_j$, we can consider the subcomplex
\[ L^\infty(\alpha_{j},\Z) \underset{\Z \Gamma_{j}}{\otimes} C_*( q_j^{-1}(X_j);\Z)
\]
of~$C_*(M_j; \alpha_j)$. Recall that~$X_j\subset M_j$ is $\pi_1$-injective
by Proposition~\ref{prop:pioneinjective} and 
that~$X_j \cong \overline{X_j} \times S^1$ by Lemma~\ref{lem:trivialbundle}, 
where each component of~$\overline{X_j}$ is contractible.
Therefore, the induced inclusion~$\pi_1(X_j) \subset \Gamma_j$ 
does not depend on the chosen basepoint: 
Let~$x,y\in X_j$. Then we choose an embedded path~$\overline\gamma$ in~$\overline M_j$ 
from~$\overline x$ to~$\overline y$ such that the interior of~$\overline\gamma$ does not
intersect the $(n-3)$-skeleton of~$\overline M_j$. Then by construction, the preimage~$\gamma$
of~$\overline \gamma$ under the orbit map is a principal $S^1$-bundle over an embedded interval
and therefore~$\gamma$ is an embedded annulus and the inclusions of the orbits corresponding 
to the start and endpoint of~$\overline \gamma$ are homotopic.
We write~$\Lambda_j := \pi_1(X_j) \subset \Gamma_j$
and~$\alpha'_j:=\res^{\Gamma_j}_{\Lambda_j} \alpha_{j}$.
For every connected component of~$X_j$ we choose a connected component of~$q_j^{-1}$(Y).
Let~$\ucov X_j \subset \ucov M_j$ be the union of the chosen components.
Then there is a canonical (isometric) chain isomorphism
\begin{align*}
	L^\infty(\alpha_{j},\Z) \underset{\Z \Gamma_{j}}{\otimes} C_*( q_j^{-1}(X_j);\Z)
	&\cong L^\infty(\alpha_{j},\Z) \underset{\Z \Gamma_{j}}{\otimes} \Z \Gamma_j \underset{\Z \Lambda_j}{\otimes} C_*( \ucov X_j;\Z)\\
	&\cong L^\infty(\alpha'_{j},\Z) \underset{\Z \Lambda_j}{\otimes} C_*( \ucov X_j;\Z)\\
	&= C_* (X_j; \alpha'_{j}).
\end{align*}

The analogous statements with~$X_{j_1, \dots, j_k}$ (or~$X_{j_1, \dots, j_k,-1}$) replacing~$X_j$ also hold for all~$k\in \{1, \dots, n-2\}$ and all~$j_1, \dots, j_k \in \{0, \dots, n-3\}$ that
are pairwise distinct,
where we define
\[ \Lambda_{j_1,\dots, j_k (, -1)} := \pi_1(X_{j_1,\dots, j_k (, -1)}) \subset \Gamma_{j_1}
		\quad \text{and} \quad
	 \alpha'_{j_1,\dots, j_k(, -1)} := 
			\res^{\Gamma_{j_1}}_{\Lambda_{j_1,\dots,j_k(, -1)}} \alpha_{j_1}.
\]

Furthermore, note that~$\alpha'_{j_1,\dots, j_k(, -1)}$ is essentially free and 
that the normed chain 
complex~$C_*\bigl(X_{j_1,\dots, j_k (, -1)};\alpha_{j_1,\dots, j_k (, -1)}\bigr)$ satisfies UBC in every degree by Lemma~\ref{lem:trivialbundle} and applying Theorem~\ref{thm:ubc} 
on every component of~$X_{j_1,\dots, j_k (, -1)}$.
\end{setup}


\section{Constructing parametrised chains with small norm}
\label{sec:constructchain}

In this section we construct
parametrised chains of small norm that we adjust
in Section~\ref{sec:proof} to get parametrised 
fundamental cycles of small norm.

\begin{prop}
\label{prop:trivialbundle}
	Let~$M$ be an oriented compact connected smooth $n$-manifold that
	admits a smooth $S^1$-action without fixed points and such that the inclusion of every
	orbit is $\pi_1$-injective. We write~$\Gamma:=\pi_1(M)$.
	Let~$\alpha\colon \Gamma \curvearrowright (Z,\mu)$ be an essentially free
	standard $\Gamma$-space.
	Then, we have
	\[ \ifsv{M_{n-2},\partial M_{n-2}}^{\alpha_{n-2}}=0,
	\]
	where~$M_{n-2}$ is defined as in Section~\ref{sec:setup}
	and~$\alpha_{n-2}$ as in Setup~\ref{setup}.
\end{prop}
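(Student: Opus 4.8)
The plan is to exploit the splitting $M_{n-2}\cong \overline M_{n-2}\times S^1$ from Lemma~\ref{lem:trivialbundle} together with the parametrised UBC for $S^1$ (Theorem~\ref{thm:ubc}). The key point is that on the $S^1$-factor the parametrised fundamental class admits arbitrarily small fillings, and one can use a cross-product-type construction to push the smallness of a fundamental cycle of $S^1$ (with an efficient filling) into $M_{n-2}$.

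Here is the sequence of steps. First I would set $N := \overline M_{n-2}$, so that $M_{n-2}\cong N\times S^1$ as oriented compact connected manifolds with boundary $\partial M_{n-2}\cong \partial(N\times S^1) = (\partial N\times S^1)$. Note that the $S^1$-action on $M_{n-2}$ is (up to the identification) just rotation on the second factor, so $\pi_1(S^1)\cong\Z$ maps into $\Gamma_{n-2}=\pi_1(M_{n-2})$; call the image $\Z'$ and restrict $\alpha_{n-2}$ along $\Z'\hookrightarrow\Gamma_{n-2}$ to get a standard $\Z'$-space $\beta$. Because the orbit inclusion $S^1\hookrightarrow M$ is $\pi_1$-injective (the hypothesis), the composite $S^1\hookrightarrow M_{n-2}\to M$ is $\pi_1$-injective, and the action is essentially free, so $\beta$ is an essentially free standard $\Z$-space; hence $C_*(S^1;\beta)$ satisfies UBC in all degrees by Theorem~\ref{thm:ubc}. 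Second, pick a fixed (integral) relative fundamental cycle $z_N\in C_{n-1}(N,\partial N;\Z)$ of $N$ — this is a fixed chain, so it contributes a fixed constant $\lambda := |z_N|_1$, independent of everything else. Third, for each $\varepsilon>0$ choose an $\alpha$-parametrised (in fact here one can even take integral, i.e.\ $\const_1$-coefficient, but parametrised is all we need) fundamental cycle $s_\varepsilon$ of $S^1$ with $|s_\varepsilon|_1<\varepsilon$; such cycles exist because $\ifsv{S^1}=0$ (equivalently $\|S^1\|=0$ and one can use constant coefficients, or simply invoke that $S^1$ has amenable fundamental group so its integral foliated simplicial volume vanishes by \cite[Theorem~1.9]{flps}). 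Fourth, form the homological cross product $z_N\times s_\varepsilon\in C_{n}(N\times S^1; \text{coeff})$; with the appropriate coefficient bookkeeping (the coefficients on the $S^1$ side carry the $\beta$-structure, extended to $\alpha_{n-2}$ via the splitting) this is a relative $\alpha_{n-2}$-fundamental cycle of $M_{n-2}$, because the cross product of fundamental classes is the fundamental class of the product and the Eilenberg--Zilber/shuffle map is compatible with the coefficient modules. The norm of the cross product is controlled: $|z_N\times s_\varepsilon|_1\le \binom{n}{n-1}\cdot|z_N|_1\cdot|s_\varepsilon|_1 \le C(n)\cdot\lambda\cdot\varepsilon$, where $C(n)$ is the combinatorial constant from the shuffle product. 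Letting $\varepsilon\to 0$ gives $\ifsv{M_{n-2},\partial M_{n-2}}^{\alpha_{n-2}}=0$.

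The main obstacle I anticipate is the coefficient bookkeeping in the cross product: one must check that $z_N\times s_\varepsilon$ genuinely lands in $C_n(M_{n-2};\alpha_{n-2})=L^\infty(Z,\mu;\Z)\otimes_{\Z\Gamma_{n-2}}C_n(\widetilde{M_{n-2}};\Z)$ and represents $[M_{n-2},\partial M_{n-2}]^{\alpha_{n-2}}$, not just a fundamental class with some ad hoc coefficients. Concretely: $\widetilde{M_{n-2}} = \widetilde N\times\R$, and $\Gamma_{n-2}$ acts through the split-ish extension $\pi_1(N)\ltimes$ (or $\times$) $\Z$; the $\Z$-factor acts on the $\R$-line by translation and on $Z$ via $\beta$, while $\pi_1(N)$ acts on $\widetilde N$ and (possibly) on $Z$; one has to see that the module tensor product factors so that the cross product of a $\pi_1(N)$-equivariant cycle on $\widetilde N$ with a $\Z$-parametrised cycle on $\R$ is well-defined over $\Z\Gamma_{n-2}$. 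A clean way around any subtlety: instead of $z_N\times s_\varepsilon$, note that the projection $M_{n-2}\to N$ is $\pi_1$-surjective with fibre $S^1$, and use the standard "product trick" for foliated simplicial volume — i.e.\ first pull back a genuine integral relative fundamental cycle of $M_{n-2}$ (which exists, $\|M_{n-2}\|_\Z<\infty$), then use the $S^1$-direction and Theorem~\ref{thm:ubc} to smear it out: replace each simplex by its product with a small efficient $S^1$-chain and correct the boundary with the UBC filling. Either route works; the cross-product route is shorter to write provided the coefficient compatibility is spelled out once and for all.

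Alternatively — and this is probably the slickest presentation — one can avoid cross products entirely by observing that $\ifsv{\cdot}^{(\cdot)}$ is multiplicative (or at least submultiplicative) under products in the sense relevant here: for a product $N\times S^1$ with the $S^1$-factor carrying an essentially free $\Z$-action, one has $\ifsv{N\times S^1,\partial(N\times S^1)}^{\alpha_{n-2}} \le C(n)\cdot\|N,\partial N\|_\Z\cdot\ifsv{S^1}^{\beta}$, and the right-hand factor $\ifsv{S^1}^{\beta}$ is $0$ since $S^1$ is a $1$-manifold with amenable fundamental group (or directly: its integral foliated simplicial volume vanishes, cf.\ \cite[Theorem~1.9]{flps}). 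This submultiplicativity is exactly the cross-product estimate above, so the content is the same; the obstacle remains verifying that the cross product of a parametrised and an unparametrised fundamental cycle is a parametrised fundamental cycle of the product with the induced parameter space, which is where I would spend the bulk of the careful writing, using Proposition~\ref{prop:locality} to reduce to checking the local condition at a single ball $U\subset M_{n-2}^\circ$ of the form $U_N\times U_{S^1}$, where it becomes a routine computation with the Künneth map in local homology.
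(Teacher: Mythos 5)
Your proposal is essentially the paper's proof: decompose $M_{n-2}\cong\overline M_{n-2}\times S^1$ via Lemma~\ref{lem:trivialbundle}, take a fixed integral relative fundamental cycle $\overline z$ of the base, take a small $\alpha'$-parametrised fundamental cycle $c_{S^1}$ of the circle factor with coefficients restricted along the $\pi_1$-injective inclusion of the $S^1$-orbit, and form the cross product $\overline z\times c_{S^1}$; the paper discharges the coefficient bookkeeping you correctly flag as the technical crux by citing \cite[Lemma~10.8]{varubc}. One parenthetical in your write-up is wrong and worth correcting, even though you note it is not needed: one \emph{cannot} take $s_\varepsilon$ with constant ($\const_1$) integral coefficients, since $\|S^1\|_\Z=1$, so every such cycle has $\ell^1$-norm at least $1$; the essential freeness of the restricted $\Z$-action --- and hence the $\pi_1$-injectivity hypothesis on orbits --- is precisely what makes $\ifsv{S^1}^{\beta}=0$ and is genuinely indispensable here.
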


In fact, the proof will give an explicit construction of efficient
parame\-trised relative fundamental cycles:

\begin{proof}
	Recall that we have~$M_{n-2}\cong \overline M_{n-2} \times S^1$ by Lemma~\ref{lem:trivialbundle}.
	In particular, $\overline M_{n-2}$ is an orientable compact connected
	$(n-1)$-manifold. We choose an orientation on~$\overline M_{n-2}$ such that
	the homeomorphism~$M_{n-2}\cong \overline M_{n-2} \times S^1$ is orientation-preserving.
	Let~$\Lambda\subset \Gamma_{n-2}$ be the subgroup corresponding to the $S^1$-factor
	of~$M_{n-2}$. 
	Then, the composition of maps
	\[ \Lambda \subset \Gamma_{n-2} \longrightarrow \Gamma
	\]
	is injective as it is induced by the inclusion of an $S^1$-orbit.
	Hence, $\alpha' := \res^{\Gamma_{n-2}}_\Lambda \alpha_{n-2}$
	is an essentially free standard $\Lambda$-space and the result follows~\cite[Lemma~10.8]{varubc}.
	More precisely, let~$\overline K$ be a triangulation of~$\overline M_{n-2}$
	that extends the simplicial structure of~$\partial \overline M_{n-2}$
	from Remark~\ref{rem:complex}.
	Since~$\overline M_{n-2}$ is an oriented compact connected manifold, we can construct
	a relative fundamental cycle~$\overline z \in C_{n-1}(\overline M_{n-2};\Z)$ out of
	the triangulation~$\overline K$ of~$\overline M_{n-2}$.
	Then, for all~$\epsilon \in \R_{>0}$ we can find an $\alpha'$-parametrised 
	fundamental cycle~$c_{S^1}\in C_1(S^1;\alpha')$ such that the $\alpha_{n-2}$-parametrised 
	relative fundamental cycle given by
	\[ z:= \overline z \times c_{S^1} \in C_n(M_{n-2};\alpha_{n-2})
	\]
	has $\ell^1$-norm less than~$\epsilon$.
\end{proof}


\section{Proof of Theorem~\ref{thm:mainthm}}
\label{sec:proof}

In this last section we prove Theorem~\ref{thm:mainthm}.
We basically transfer Yano's original proof~\cite[Section~3]{yano} to the parametrised setting
with the difference that we use the uniform boundary condition for~$S^1$
(Theorem~\ref{thm:ubc}) to get efficient fillings.

In the following, we use the same notation as in Section~\ref{sec:setup} and Setup~\ref{setup}.
Let~$\epsilon \in \R_{>0}$.  We start with an $\alpha_{n-2}$-parametrised relative fundamental cycle~$z= \overline z \times c_{S^1}$ of~$M_{n-2}$ as in the proof of
Proposition~\ref{prop:trivialbundle} with $\ell^1$-norm less than~$\epsilon$.
For all~$j \in \{ -1, \dots, n-3 \}$ we define
\[ \overline z_j := (\partial \overline z )|_{\overline N_j} \in C_{n-2}(\overline N_j;\Z)
\]
as the sum of all simplices in~$\partial \overline z$ that belong to the
subcomplex~$\overline N_j\subset \partial \overline M_{n-2}$. We set
\[ z_j := \overline z_j \times c_{S^1} \in L^\infty(\alpha_{n-2},\Z) \underset{\Z \Gamma_{n-2}}{\otimes} C_{n-1}\bigl( q_{n-2}^{-1}(\ucov N_j);\Z\bigr).
\]
Analogously, for all~$k \in \{ 1, \dots, n-1 \}$ and 
all~$j_1, \dots, j_k \in \{ -1, \dots, n-3 \}$ 
that are pairwise distinct we define inductively
\[ \overline z_{j_1, \dots, j_k} := 
		(\partial \overline z_{j_1, \dots, j_{k-1}}) |_{\overline N_{j_1, \dots, j_k}} 
		\in C_{n-1-k}( \overline N_{j_1, \dots, j_k};\Z)
\]
and we set~$\overline z_{j_1, \dots, j_k}:=0$ if~$j_1, \dots, j_k$ are not pairwise distinct.
We define
\[ z_{j_1, \dots, j_k} := \overline z_{j_1, \dots, j_k} \times c_{S^1}
		\in L^\infty(\alpha_{n-2},\Z) \underset{\Z \Gamma_{n-2}} \otimes
		C_{n-k}\bigl( q_{n-2}^{-1}(\ucov N_{j_1, \dots, j_k});\Z\bigr).
\]

\begin{lem}
\label{lem:boundary}
	We have
	\[ \partial z = \sum_{j=-1}^{n-3} z_j
			\quad \text{ and } \quad
		 \partial z_{j_1, \dots, j_k} = \sum_{j=-1}^{n-3} z_{j_1, \dots, j_k, j}
	\]
	for all~$k \in \{ 1, \dots, n-1\}$ and pairwise 
	distinct~$j_1, \dots, j_k \in \{ -1, \dots, n-3 \}$.
\end{lem}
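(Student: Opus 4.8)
The plan is to reduce everything to a single computation in the untwisted simplicial chain complex of $\overline M_{n-2}$ and then transport it across the product decomposition and the coefficient change. First I would observe that the boundary operator on $C_*(M_{n-2};\alpha_{n-2})$ is compatible with the product decomposition $M_{n-2}\cong \overline M_{n-2}\times S^1$: for a product chain $\overline w \times c_{S^1}$ with $\partial c_{S^1}=0$ (the cycle $c_{S^1}$ we start with is an $\alpha'$-parametrised fundamental cycle of $S^1$, hence closed), the Eilenberg--Zilber / cross-product formula gives $\partial(\overline w\times c_{S^1}) = (\partial \overline w)\times c_{S^1} \pm \overline w\times \partial c_{S^1} = (\partial \overline w)\times c_{S^1}$, up to the usual sign, which in our convention is absorbed into the choice of orientation. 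So the entire statement is equivalent to the purely simplicial identities
\[
\partial \overline z = \sum_{j=-1}^{n-3} \overline z_j
\quad\text{and}\quad
\partial \overline z_{j_1,\dots,j_k} = \sum_{j=-1}^{n-3} \overline z_{j_1,\dots,j_k,j}
\]
in $C_{*}(\partial\overline M_{n-2};\Z)$, together with the remark that $\overline z_{j_1,\dots,j_k}=0$ when the indices are not pairwise distinct.

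The first identity is immediate from the definitions once one knows that $\partial\overline z$ is supported on $\partial\overline M_{n-2}$. Indeed, $\overline z$ is a relative fundamental cycle of $\overline M_{n-2}$ built from the triangulation $\overline K$, so $\partial\overline z\in C_{n-2}(\partial\overline M_{n-2};\Z)$; by Remark~\ref{rem:complex} we have the decomposition $\partial\overline M_{n-2}=\bigcup_{i=-1}^{n-3}\overline N_i$ into subcomplexes, and each $(n-2)$-simplex of $\partial\overline M_{n-2}$ lies in exactly one of the top-dimensional pieces $\overline N_i$ (the $\overline N_i$ meet only in lower-dimensional faces, by the compatibility of the chosen simplicial structure with the decomposition). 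Hence $\partial\overline z = \sum_{i}(\partial\overline z)|_{\overline N_i} = \sum_i \overline z_i$, which is the first equation. For the inductive step I would argue the same way one dimension down: by Remark~\ref{rem:complex}, $\overline N_{j_1,\dots,j_{k-1}}$ is an $(n-1-k)$-dimensional subcomplex whose boundary subcomplex decomposes as $\partial\overline N_{j_1,\dots,j_{k-1}}=\bigcup_{j}\overline N_{j_1,\dots,j_{k-1},j}$, the union over $j\in\{-1,\dots,n-3\}\setminus\{j_1,\dots,j_{k-1}\}$. By the inductive hypothesis $\overline z_{j_1,\dots,j_{k-1}}$ is a chain in $\overline N_{j_1,\dots,j_{k-1}}$ whose boundary is therefore supported on $\partial\overline N_{j_1,\dots,j_{k-1}}$; splitting that boundary according to the top-dimensional pieces of the decomposition and using that distinct pieces share only lower-dimensional faces gives $\partial\overline z_{j_1,\dots,j_{k-1}} = \sum_j (\partial\overline z_{j_1,\dots,j_{k-1}})|_{\overline N_{j_1,\dots,j_{k-1},j}} = \sum_j \overline z_{j_1,\dots,j_{k-1},j}$, where the terms with a repeated index are zero by convention; reindexing with $k$ in place of $k-1$ yields the claimed formula.

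The main obstacle, and the point that needs care rather than cleverness, is the bookkeeping around the degenerate cases and the index $-1$: one must check that the convention $\overline z_{j_1,\dots,j_k}:=0$ for non-distinct indices is consistent with the restriction-to-subcomplex description (so that the sum $\sum_{j=-1}^{n-3}$ can run over all $j$ without excluding the already-used indices), and one must confirm that Proposition~\ref{prop:boundary} and Remark~\ref{rem:complex} indeed cover the subcomplexes involving $-1$, i.e.\ that $\partial\overline M_{n-2}=\bigcup_{i=-1}^{n-3}\overline N_i$ and $\partial\overline N_{j_1,\dots,j_k}=\bigcup_j \overline N_{j_1,\dots,j_k,j}$ are genuine simplicial decompositions into subcomplexes for the chosen triangulation — this is exactly what Remark~\ref{rem:complex} asserts, so it may be invoked directly. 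Once the simplicial identities are in hand, multiplying through by $c_{S^1}$ and noting $\partial c_{S^1}=0$ finishes the proof; no estimates are involved.
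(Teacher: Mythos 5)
Your proposal follows essentially the same approach as the paper's proof: first reduce to the purely simplicial identities for $\overline z$ and $\overline z_{j_1,\dots,j_k}$ (using that the cross product is a chain map and that $c_{S^1}$ is a genuine cycle, so no sign or correction term appears), and then derive these from the subcomplex decompositions $\partial\overline M_{n-2}=\bigcup_i\overline N_i$ and $\partial\overline N_{j_1,\dots,j_k}=\bigcup_j\overline N_{j_1,\dots,j_k,j}$ of Remark~\ref{rem:complex}. You merely spell out the reduction step and the convention for degenerate indices in slightly more detail than the paper, which simply states ``it is enough to show the analogous statements for~$\overline z$.''
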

\begin{proof}
	It is is enough to show the 
	analogous statements for~$\overline z$ and~$\overline z_{j_1,\dots, j_k}$.
	Since~$\partial\overline M_{n-2}$
	is a subcomplex of~$\overline K$ it follows from Remark~\ref{rem:complex}
	that we have
	\[ \partial \overline z
			= (\partial \overline z)|_{\partial \overline M_{n-2}}
			= \sum_{j=-1}^{n-3} (\partial \overline z)|_{\overline N_j}
			= \sum_{j=-1}^{n-3} \overline z_j
	\]
	and for all~$k\in\{1,\dots, n-1\}$ and all~$j_1,\dots, j_k\in\{-1,\dots, n-3\}$
	that are pairwise distinct, we have
	\begin{align*}
		\partial \overline z_{j_1,\dots, j_k}
			= (\partial \overline z_{j_1,\dots, j_k})|_{\partial \overline N_{j_1, \dots, j_k}}
			= \sum_{i}(\partial \overline z_{j_1,\dots, j_k})|_{\overline N_{j_1,\dots, j_k,i}}
			= \sum_{j=-1}^{n-3} \overline z_{j_1,\dots, j_k,j}
	\end{align*}
	where~$i$ ranges over~$\{-1,\dots, n-3\}\setminus\{j_1,\dots , j_k\}$.
\end{proof}

\begin{lem}
\label{lem:alternating}
	Let~$k\in \{1,\dots, n-1\}$ and
	let~$\tau$ be a permutation of~$\{ 1, \dots, k \}$. Then we have
	\[ z_{j_1, \dots, j_k} = \sign(\tau) \cdot z_{j_{\tau(1)}, \dots, j_{\tau(k)}}.
	\]
\end{lem}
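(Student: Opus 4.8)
The plan is to reduce everything to the analogous combinatorial statement about the integral chains $\overline z_{j_1,\dots,j_k}$, since $z_{j_1,\dots,j_k} = \overline z_{j_1,\dots,j_k}\times c_{S^1}$ and the operation $(\args)\times c_{S^1}$ is additive; so it suffices to show
\[
\overline z_{j_1,\dots,j_k} = \sign(\tau)\cdot \overline z_{j_{\tau(1)},\dots,j_{\tau(k)}}
\]
for every permutation $\tau$ of $\{1,\dots,k\}$. Since every permutation is a product of transpositions of adjacent indices, it is enough to treat the case where $\tau$ swaps two consecutive entries $j_i,j_{i+1}$; and using the inductive definition $\overline z_{j_1,\dots,j_k} = (\partial\overline z_{j_1,\dots,j_{k-1}})|_{\overline N_{j_1,\dots,j_k}}$ together with the fact that restriction to a subcomplex and the boundary operator all commute with one another, one reduces further to the base statement $\overline z_{j,j'} = -\,\overline z_{j',j}$ for any two distinct $j,j'\in\{-1,\dots,n-3\}$, applied at the appropriate stage of the induction.

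First I would make the commutation step precise: for a chain $c$ and subcomplexes $A,B$ of $\partial\overline M_{n-2}$ one has $(c|_A)|_B = (c|_B)|_A = c|_{A\cap B}$, because $c|_A$ simply records the coefficients of those simplices of $c$ lying in $A$. Hence, unwinding the inductive definition,
\[
\overline z_{j_1,\dots,j_k}
  = \bigl(\partial(\cdots(\partial(\partial\overline z)|_{\overline N_{j_1}})\cdots)|_{\overline N_{j_1,\dots,j_{k-1}}}\bigr)\big|_{\overline N_{j_1,\dots,j_k}},
\]
and swapping $j_i$ with $j_{i+1}$ only affects the two innermost relevant restriction–boundary pairs; the compatibility of the simplicial structure on $\partial\overline M_{n-2}$ with the decompositions $\partial\overline N_{j_1,\dots,j_\ell} = \bigcup_j \overline N_{j_1,\dots,j_\ell,j}$ from Remark~\ref{rem:complex} guarantees that all the intermediate subcomplexes are honest subcomplexes, so the restrictions are well-defined simplicial chains. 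Thus the problem collapses to: for a simplicial chain $w$ in a complex whose boundary $\partial w$ lives in a subcomplex decomposing as $\overline N_{\dots,j}\cup\overline N_{\dots,j'}\cup(\text{rest})$, show $(\partial\, (\partial w)|_{\overline N_{\dots,j}})|_{\overline N_{\dots,j',j}}$ equals $-(\partial\,(\partial w)|_{\overline N_{\dots,j'}})|_{\overline N_{\dots,j,j'}}$.

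The remaining base identity $\overline z_{j,j'} = -\,\overline z_{j',j}$ is where the real content sits, and it is essentially the standard sign bookkeeping for the iterated "face-of-a-face" combinatorics: a codimension-two face $\overline N_{j,j'}$ of $\overline M_{n-2}$ appears once in $\partial(\partial\overline z)|_{\overline N_j}$ and once in $\partial(\partial\overline z)|_{\overline N_{j'}}$, and $\partial\circ\partial = 0$ on $\overline z$ forces these two contributions to cancel, i.e. to be negatives of one another when separated out. Concretely, $0 = (\partial\partial\overline z)|_{\overline N_{j,j'}} = \overline z_{j,j'} + \overline z_{j',j}$ after grouping the two ways the face $\overline N_{j,j'}$ is reached, using that $\overline N_{j,j'} = \overline N_{j}\cap\overline N_{j'}$ and that no other $\overline N_{i}$ with $i\neq j,j'$ contains a simplex of $\overline N_{j,j'}$ of the right dimension. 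I expect this last point — verifying that the only two boundary-restriction routes into $\overline N_{j,j'}$ are via $\overline N_j$ and via $\overline N_{j'}$, so that the vanishing of $\partial^2$ genuinely pins down the sign — to be the main obstacle; it relies on the care taken in Remark~\ref{rem:complex} in choosing the simplicial structure compatibly with all the nested decompositions, and once that is in hand the alternating property propagates up the induction verbatim.
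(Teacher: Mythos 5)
Your argument is essentially the same as the paper's: both exploit the vanishing $\partial\partial\overline z_{j_1,\dots,j_{k-2}} = 0$ together with the observation that, after restricting to the codimension-$k$ wall $\overline N_{j_1,\dots,j_k}$, the only other surviving term is $\overline z_{j_1,\dots,j_{k-2},j_k,j_{k-1}}$, which forces the sign flip under swapping the last two indices and then propagates to all transpositions. One phrasing to tighten: the boundary operator does \emph{not} commute with restriction to a subcomplex, but your propagation step does not actually need this --- it only uses linearity of $\partial$ and of restriction plus the symmetry of $\overline N_{j_1,\dots,j_\ell}$ in its indices --- and the ``base identity'' must be proved with the arbitrary prefix $\overline z_{j_1,\dots,j_{k-2}}$ in place of $\overline z$, which is exactly what the paper does directly.
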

\begin{proof}
	We may assume that~$\tau$ is a transposition.
	In fact, it is enough to consider the case of swapping the last two indices, i.e., to show
	that
	\[ z_{j_1, \dots, j_{k}} = - z_{j_1, \dots, j_{k-2}, j_k, j_{k-1}}.
	\]
	By Lemma~\ref{lem:boundary} we have
	\begin{align*}
		0 = \partial \partial z_{j_1, \dots, j_{k-2}} = \sum_{j=-1}^{n-3} \partial z_{j_1, \dots, j_{k-2}, j}
			=\sum_{j=-1}^{n-3} \sum_{i=-1}^{n-3} z_{j_1, \dots, j_{k-2}, j, i}.
	\end{align*}
	Because~$\partial \overline M_{n-2}$ is a subcomplex of~$\overline K$
	and by Remark~\ref{rem:complex} it follows from the definition of~$z_{j_1, \dots, j_{k}}$
	that the only term that can cancel~$z_{j_1, \dots, j_{k}}$ out
	is a term that has the same indices as~$z_{j_1, \dots, j_{k}}$, 
	namely~$z_{j_1, \dots, j_{k-2}, j_k, j_{k-1}}$,
	and therefore,
	\[ z_{j_1, \dots, j_{k}} = - z_{j_1, \dots, j_{k-2}, j_k, j_{k-1}}.\qedhere
	\]
\end{proof}

\begin{lem}
	There exist families of chains
	\[ w_{j_1, \dots, j_k}
	\in C_{n-k+1}(X_{j_1, \dots, j_k};\alpha'_{j_1, \dots, j_k})
	\]
	and
	\[ w_{j_1, \dots, j_k, -1}
	\in C_{n-k}(X_{j_1, \dots, j_k,-1};\alpha'_{j_1, \dots, j_k,-1})
	\]
	for all~$k\in \{1, \dots, n-2\}$ and~$j_1, \dots, j_k \in \{0, \dots, n-3\}$ 
	satisfying the following conditions:
	\begin{enumerate}
		\item The chains~$w_{j_1, \dots, j_k}$ and~$w_{j_1, \dots, j_k , -1}$ are alternating 
		with respect to permutations of the indices~$\{j_1, \dots, j_k\}$.
		\item We have
			\[ \partial w_{j_1, \dots, j_{n-2}, -1} = P_{n-2,j_1} (z_{j_1, \dots, j_{n-2}, -1})
			\]
			and
			\[ \partial w_{j_1, \dots, j_k}
					= P_{n-2,j_1}(z_{j_1, \dots, j_k}) - \sum_{j=-1}^{n-3} w_{j_1, \dots, j_k, j}
			\]
			and for~$k< n-2$ we have
			\[ \partial w_{j_1, \dots, j_k, -1}
			    = P_{n-2,j_1}(z_{j_1, \dots, j_k, -1}) + \sum_{j=0}^{n-3} w_{j_1, \dots, j_k, j, -1}.
			\]
		\item Let~$C \in \R_{>0}$ be the maximum of UBC-constants for all~$X_{j_1,\dots,j_k}$
		and all~$X_{j_1,\dots,j_k, -1}$ in all degrees from~$0$ to~$n$.
		Then, we have
			\[ |w_{j_1, \dots, j_k}|_1 \leq C \cdot B^{n-k-1} \cdot (n+1)! \cdot |z|_1
			\]
		where~$B:= 1+C \cdot (n-1)$ and~$j_k$ might be~$-1$.
	\end{enumerate}
\end{lem}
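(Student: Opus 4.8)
The plan is to carry Yano's descending induction over the strata of his construction into the parametrised setting, producing the fillings via the parametrised uniform boundary condition for~$S^1$. The crucial geometric input is that, by Lemma~\ref{lem:trivialbundle} and Proposition~\ref{prop:boundary}, every~$X_{j_1,\dots,j_k}$ and every~$X_{j_1,\dots,j_k,-1}$ is homotopy equivalent to a finite disjoint union of circles, and---exactly as in Proposition~\ref{prop:trivialbundle}---on the fundamental group~$\cong\Z$ of each such circle the corresponding action~$\alpha'_{j_1,\dots,j_k}$ (respectively~$\alpha'_{j_1,\dots,j_k,-1}$) restricts to an essentially free standard~$\Z$-space, because the composite of the inclusion into~$M_{j_1}$ with~$p_{j_1,0}$ is the inclusion of an~$S^1$-orbit into~$M$. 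Since the chain maps and the chain homotopies induced by homotopies of spaces (the latter via the prism decomposition of~$\Delta^m\times[0,1]$) are automatically bounded (Remark~\ref{rem:functorial}), Theorem~\ref{thm:ubc} together with Proposition~\ref{prop:ubc} shows that each~$C_*(X_{j_1,\dots,j_k};\alpha'_{j_1,\dots,j_k})$ and~$C_*(X_{j_1,\dots,j_k,-1};\alpha'_{j_1,\dots,j_k,-1})$ satisfies UBC in every degree and has homology vanishing in all degrees~$\geq 2$; this in particular furnishes the constant~$C$ of~(3). I also record the elementary estimate~$|z_{j_1,\dots,j_k}|_1\leq(n+1)!\cdot|z|_1$ (and likewise for tuples ending in~$-1$): since~$\partial c_{S^1}=0$, each~$z_{j_1,\dots,j_k}$ is the restriction of~$\partial z_{j_1,\dots,j_{k-1}}$ to a subcomplex, so taking~$k$ boundaries of the degree-$n$ cycle~$z$ costs at most the factor~$(n+1)n\cdots(n+2-k)\leq(n+1)!$, while restrictions to subcomplexes and the norm-nonincreasing maps~$P_{n-2,j_1}$ do not enlarge the~$\ell^1$-norm.

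The chains~$w$ are then constructed by downward induction on the number~$m\in\{n-2,\dots,1\}$ of indices from~$\{0,\dots,n-3\}$; within level~$m$ one first builds the chains~$w_{j_1,\dots,j_m,-1}$ and afterwards the chains~$w_{j_1,\dots,j_m}$. To guarantee~(1) I would construct each of them only for strictly increasing index tuples and extend to arbitrary orderings by the sign rule; the equations of~(2) stay consistent under this because the~$z$-chains are already alternating (Lemma~\ref{lem:alternating}) and the strata~$X_{j_1,\dots,j_k}$, together with their parametrised chain complexes, are canonically identified for different orderings of the indices via the projections~$p_{l,l'}$. Note also that~$z_{j_1,\dots,j_k}$ is supported on~$q_{n-2}^{-1}(\widetilde N_{j_1,\dots,j_k})$ and~$p_{n-2,j_1}$ maps~$\widetilde N_{j_1,\dots,j_k}$ onto~$X_{j_1,\dots,j_k}$, so~$P_{n-2,j_1}(z_{j_1,\dots,j_k})$ does indeed lie in~$C_*(X_{j_1,\dots,j_k};\alpha'_{j_1,\dots,j_k})$ under the identifications of Setup~\ref{setup}.

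At each step the chain to be filled is a cycle: applying~$\partial$ to the right-hand side of the relevant equation in~(2) and using~$\partial\circ P_j=P_j\circ\partial$, Lemma~\ref{lem:boundary} for the~$z$-terms and the boundary formulas already proved for the~$w$-terms, one finds that all terms of the form~$P_{n-2,j_1}(z_\bullet)$ cancel, the mixed~$w$-terms cancel in pairs, and the surviving double sums of~$w$'s vanish because the~$w$'s are alternating. For~$m<n-2$ this cycle lies in degree~$\geq 2$ and hence is null-homologous in the pertinent (disjoint-union-of-circles) complex, so Theorem~\ref{thm:ubc}/Proposition~\ref{prop:ubc} provides an efficient filling~$w$. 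The single exception is the base case~$m=n-2$, where the cycle~$P_{n-2,j_1}(z_{j_1,\dots,j_{n-2},-1})$ has degree~$1$; there I would observe that it equals~$\partial\bigl(P_{n-2,j_1}(z_{j_1,\dots,j_{n-2}})\bigr)$ restricted to the union of connected components of~$X_{j_1,\dots,j_{n-2}}$ constituting~$X_{j_1,\dots,j_{n-2},-1}$ (Proposition~\ref{prop:boundary}), so it is already a boundary in that subcomplex and UBC applies (equivalently one simply takes this restriction to be~$w_{j_1,\dots,j_{n-2},-1}$).

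It remains to propagate the norm bound~(3) along the same induction. Writing~$V_m$ and~$U_m$ for the maximal~$\ell^1$-norms among the level-$m$ chains~$w_{j_1,\dots,j_m,-1}$ and~$w_{j_1,\dots,j_m}$ respectively, efficiency of the fillings together with~$|z_\bullet|_1\leq(n+1)!|z|_1$ and~$\|P_{n-2,j_1}\|\leq 1$ yields, for~$m<n-2$, recursions of the shape~$V_m\leq C\bigl((n+1)!|z|_1+(n-2)V_{m+1}\bigr)$ and~$U_m\leq C\bigl((n+1)!|z|_1+V_m+(n-2)U_{m+1}\bigr)$, with base case~$V_{n-2}\leq C(n+1)!|z|_1$ and~$U_{n-2}\leq C\bigl((n+1)!|z|_1+V_{n-2}\bigr)$. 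Setting~$B=1+C(n-1)$ and using~$B^{\,\ell}\geq 1$ for~$\ell\geq 0$, a direct check shows these recursions preserve the bounds~$V_m\leq C\,B^{\,n-m-2}(n+1)!|z|_1$ and~$U_m\leq C\,B^{\,n-m-1}(n+1)!|z|_1$, which is precisely~(3) once a chain~$w_{j_1,\dots,j_k,-1}$ is counted as a~$w$ with~$k+1$ indices. The main obstacle is the bookkeeping: keeping the many cancellations in the cycle verification straight, and making sure the counts of summands entering the recursions are sharp enough for the powers of~$B$ to telescope exactly. The genuinely nontrivial geometry---each stratum being a circle up to homotopy, with the restricted actions still essentially free---is where Yano's construction and the parametrised~$S^1$-UBC do the real work.
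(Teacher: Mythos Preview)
Your proposal is correct and follows essentially the same approach as the paper's proof: a downward induction on the number of non-$(-1)$ indices, at each level first constructing the chains ending in~$-1$ and then the others, using Proposition~\ref{prop:boundary} to see the degree-$1$ base-case cycle as a boundary in the subcomplex~$X_{j_1,\dots,j_{n-2},-1}$, and applying the parametrised UBC for~$S^1$ (via Proposition~\ref{prop:ubc}) to obtain the efficient fillings, with the alternating extension defined via the projections~$p_{l,l'}$. The only cosmetic differences are that the paper uses strictly decreasing rather than increasing tuples as the canonical ordering and bounds all~$n-1$ summands in~$\sum_{j=-1}^{n-3}|w_{j_1,\dots,j_k,j}|_1$ uniformly by~$C\,B^{n-k-2}(n+1)!\,|z|_1$ instead of separating out the~$j=-1$ term; your recursions in~$V_m$ and~$U_m$ yield the same bound.
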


\begin{proof}
	We prove the lemma by downward induction on~$k$. Let~$k=n-2$.
	By Lemma~\ref{lem:boundary} we have
	\[ \partial z_{n-3, \dots, 0} = \sum_{j=-1}^{n-3} z_{n-3, \dots, 0, j} = z_{n-3, \dots, 0, -1}
	\]
	We even have
	\[ \partial \bigl( P_{n-3}(z_{n-3, \dots, 0})|_{X_{n-3, \dots, -1}} \bigr) 
			= P_{n-3} \bigl(z_{n-3, \dots, 0, -1}\bigr)
	\]
	since~$X_{n-3, \dots, -1}$ is a union of connected components of $X_{n-3, \dots, 0}$
	by Proposition~\ref{prop:boundary},
	and therefore,
	\[ P_{n-3}(z_{n-3, \dots, -1}) \in 
	     C_{1} (X_{n-3, \dots, -1} ; \alpha'_{n-3, \dots, -1} )
	\]
	is a null-homologous cycle.
	
	We can apply the parametrised uniform boundary condition for~$S^1$ 
	(Theorem~\ref{thm:ubc}) on each connected component of~$X_{n-3, \dots, -1}$.
	Then, there exists a 
	chain~$w_{n-3, \dots, -1} \in C_{2} (X_{n-3, \dots, -1} ; \alpha'_{n-3, \dots, -1} )$ with
	\[ \partial w_{n-3, \dots, -1} = P_{n-3}(z_{n-3, \dots, -1})
			\quad \text{and} \quad
		 |w_{n-3, \dots, -1}|_1 \leq C \cdot |z_{n-3, \dots, -1}|_1.
	\]
	For each permutation~$\tau$ of~$\{0,\dots, n-3\}$ we set
	\[ w_{\tau(n-3), \dots, \tau(0), -1} := \sign (\tau) \cdot P_{n-3,\tau(n-3)} (w_{n-3, \dots, 0, -1})
	\]
	and we set~$w_{j_1,\dots, j_{n-2}, j} = 0$ for~$\{j_1,\dots, j_{n-2}, j\}\neq \{-1,\dots, n-3\}$.
	
	Now, let~$k\in \{1, \dots, n-2\}$ such that~$w_{j_1, \dots, j_k, j}$ is
	defined for all~$j_1, \dots, j_k \in \{0, \dots, n-3\}$ and all~$j\in \{-1, \dots, n-3\}$.
	Let~$j_1, \dots, j_k \in \{ 0, \dots, n-3 \}$ with~$j_1 > \dots > j_k$.
	We want to define~$w_{j_1,\dots, j_k}$.
  We observe that
	\[ \widetilde z_{j_1, \dots, j_k} :=
	    P_{n-2, j_1}(z_{j_1, \dots, j_k}) - \sum_{j=-1}^{n-3} w_{j_1, \dots, j_k, j}
	\]
	is a cycle in~$C_{n-k}(X_{j_1, \dots, j_k};\alpha'_{j_1, \dots, j_k})$.
	Furthermore, the cycle~$\widetilde z_{j_1, \dots, j_k}$ is null-homologous:
	By Lemma~\ref{lem:trivialbundle}, each component of~$\overline X_{j_1, \dots, j_k}$ is 
	contractible and we have~$X_{j_1, \dots, j_k}\cong \overline X_{j_1, \dots, j_k} \times S^1$
	which implies
	\[ H_l(X_{j_1, \dots, j_k} ; \alpha'_{j_1, \dots, j_k} )\cong 0
	\]
	for all~$l\in \N_{\geq 2}$.
	We can apply the parametrised uniform boundary condition for~$S^1$ 
	(Theorem~\ref{thm:ubc}) on each connected component of~$X_{j_1, \dots, j_k}$.
	Then, there exists a 
	chain~$w_{j_1, \dots, j_k} \in C_{n-k+1} (X_{j_1, \dots, j_k} ; \alpha'_{j_1, \dots, j_k} )$ with
	\[ \partial w_{j_1, \dots, j_k} = \widetilde z_{j_1, \dots, j_k}
			= P_{n-2, j_1}(z_{j_1, \dots, j_k}) - \sum_{j=-1}^{n-3} w_{j_1, \dots, j_k, j}
	\]
	and
  \begin{align*}
		|w_{j_1, \dots, j_k}|_1 &\leq 
							C \cdot |z_{j_1, \dots, j_k}|_1 + C\cdot \sum_{j = -1}^{n-3} 
									|w_{j_1,\dots, j_k,j}|_1\\
						&\leq C \cdot (n+1)! \cdot |z|_1 
						   + (n-1) \cdot C^2 \cdot B^{n-k-2} \cdot (n+1)! \cdot |z|_1\\
						&\leq C\cdot B^{n-k-1} \cdot (n+1)! \cdot |z|_1.
	\end{align*}
	For arbitrary~$j_1, \dots, j_k \in \{ 0, \dots, n-3 \}$ we define 
	$w_{j_1, \dots, j_k} := 0$ if~$j_1, \dots, j_k$ are not pairwise distinct and otherwise we define
	\[ w_{j_1, \dots, j_k} := \sign(\tau) \cdot P_{\tau(j_1),j_1} (w_{\tau(j_1), \dots, \tau(j_k)}),
	\]
	where~$\tau$ is the unique permutation
	on~$\{ j_1, \dots, j_k \}$ with~$\tau(j_1) > \dots > \tau(j_k)$.
	
	Let now~$k\in \{1, \dots, n-3\}$ such that~$w_{j_1, \dots, j_k, j}$ is defined for all~$j_1, \dots, j_k \in \{0, \dots, n-3\}$ and all~$j\in \{0, \dots, n-3\}$.
	Let~$j_1, \dots, j_k \in \{0, \dots, n-3\}$ with~$j_1 > \dots > j_k$.
	We want to define~$w_{j_1, \dots, j_k, -1}$.
	We consider
	\[ \hat{z}_{j_1, \dots, j_k}
			:= P_{n-2,j_1}(z_{j_1, \dots, j_k}) - \sum_{j=0}^{n-3} w_{j_1, \dots, j_k, j}
			\in C_{n-k} ( X_{j_1, \dots, j_k} ; \alpha'_{j_1, \dots, j_k} )
	\]
	and verify that
	\[ \partial \hat{z}_{j_1, \dots, j_k} 
	     = P_{n-2,j_1}(z_{j_1, \dots, j_k, -1}) + \sum_{j=0}^{n-3} w_{j_1, \dots, j_k, j, -1}.
	\]
	Since~$X_{j_1, \dots, j_k, -1}$ is the union of components of~$X_{j_1, \dots, j_k}$
	that lie in~$p_{n-2, j_1}(\widetilde N_{-1})$, we have
	that
	\[ \partial \hat{z}_{j_1, \dots, j_k} \in
	     C_{n-k} ( X_{j_1, \dots, j_k, -1} ; \alpha'_{j_1, \dots, j_k, -1} )
	\]
	is a null-homologous cycle
	and by Theorem~\ref{thm:ubc} there exists an efficient filling
	\[ w_{j_1, \dots, j_k, -1} 
       \in C_{n-k} ( X_{j_1, \dots, j_k, -1} ; \alpha'_{j_1, \dots, j_k, -1} ),
	\]
	i.e., we have
	\[ \partial w_{j_1, \dots, j_k, -1} = \partial \hat{z}_{j_1, \dots, j_k}
			=P_{n-2,j_1}(z_{j_1, \dots, j_k, -1}) + \sum_{j=0}^{n-3} w_{j_1, \dots, j_k, j, -1}
	\]
	and
	\begin{align*}
		|\partial w_{j_1, \dots, j_k, -1}|_1 &\leq C\cdot |z_{j_1, \dots, j_k, -1}|_1
				+C\cdot \sum_{j=0}^{n-3}|w_{j_1, \dots, j_k, j, -1}|_1\\
			&\leq C\cdot (n+1)!\cdot |z|_1 + (n-2) \cdot C^2 \cdot B^{n-k-3} (n+1)! \cdot|z|_1 \\
			&\leq C\cdot B^{n-k-2}\cdot (n+1)! \cdot |z|_1.
	\end{align*}
	For arbitrary~$j_1, \dots, j_k \in \{ 0, \dots, n-3 \}$ we define 
	$w_{j_1, \dots, j_k,-1} := 0$ if~$j_1, \dots, j_k$ are not pairwise distinct
	and otherwise we define
	\[ w_{j_1, \dots, j_k,-1} 
			:= \sign(\tau) \cdot P_{\tau(j_1),j_1} (w_{\tau(j_1), \dots, \tau(j_k), -1}),
	\]
	where~$\tau$ is the permutation
	on~$\{ j_1, \dots, j_k \}$ with~$\tau(j_1) > \dots > \tau(j_k)$.
\end{proof}

Finally, we are prepared to prove Theorem~\ref{thm:mainthm}:

\begin{proof}[Proof of Theorem~\ref{thm:mainthm}]
	We set
	\[ z':= P_{n-2,0} (z) - \sum_{j=0}^{n-3} P_{j,0} (w_j)\in C_n(M;\alpha).
	\]
	By construction, $z'$	is a relative cycle in~$C_n(M;\alpha)$ with norm
	\[ |z'|_1 \leq C \cdot B^{n-3} \cdot (n+1)! \cdot |z|_1,
	\]
	where~$z$ can be chosen with arbitrary small norm.
	
	It is left to show that~$z'$ is an $\alpha$-parametrised relative fundamental cycle of~$M$:
	We write~$p:=p_{n-2,0}$.
	Let~$x\in M\setminus p(\partial M_{n-2})$ and
	let~$U\subset M\setminus p(\partial M_{n-2})$ be an open embedded $n$-ball with~$x \in U$ 
	and~$\overline U \subset M^\circ$.
	By Proposition~\ref{prop:locality} it is sufficient to prove that~$z'$ is a $U$-local
	$\alpha$-parametrised relative fundamental cycle of~$M$, i.e., we have
	\[ F\bigl([i(z')]\bigr) = \const_1\in A_\Gamma \cong H_n(M,M\setminus U, A_\Gamma),
	\]
	where~$F:= f_*$ is the induced map in homology of the chain map
	\[ f\colon C_n(M,\partial M; \alpha) \longrightarrow 
			A_\Gamma \underset{\Z}\otimes C_n(M, M \setminus U; \Z)
	\]
	which is defined as in Definition~\ref{def:local} and
	\[i\colon C_n(M;\alpha)\longrightarrow C_n(M,\partial M;\alpha). 
	\]
	is the canonical map.
	Here, we write~$A:= L^\infty(\alpha;\Z)$.
	
	We write~$\beta:= \alpha_{n-2}$,~$B:= L^\infty(\beta;\Z)$,~$N:=M_{n-2}$ and~$\Lambda:= \pi_1(N)$.
	Since by construction~$p$
	is the identity on~$M\setminus p(\partial M_{n-2})$ we also write~$U$ for~$p^{-1}(U)$.
	Let
	\[ g\colon C_n(N,\partial N; \alpha) \longrightarrow 
			B_\Lambda \underset{\Z}\otimes C_n(N, N \setminus U; \Z)
	\]
	be the analogous map to~$f$ from Definition~\ref{def:local} and let
	\[j\colon C_n(N;\alpha)\longrightarrow C_n(N,\partial N;\alpha)
	\]
	be the canonical map.
	
	Consider the following diagram
	\[ \begin{tikzcd}
		C_n(N;B) \arrow{r}{p_*} \arrow[swap]{d}{j} 
		& C_n(M;A) \arrow{d}{i}\\
		C_n(N, \partial N;B) \arrow[swap]{d}{g} 
		& C_n(M, \partial M;A) \arrow{d}{f}\\
	  B_\Lambda \underset{\Z}\otimes C_n(N, N \setminus U; \Z) \arrow{r}{t}
		& A_\Gamma \underset{\Z}\otimes C_n(M, M \setminus U; \Z)
	\end{tikzcd}
	\]
	In the following capital letters denote the induced maps in homology.
	Since by construction, $z'$ and~$p_*(z)$ coincide on~$U$, we have
	\[ f\circ i (z') = f\circ i \bigl(p_*(z)\bigr).
	\]
	Moreover, $z$ is a $\beta$-parametrised fundamental cycle of~$N$, so by
	Proposition~\ref{prop:locality}, we have
	\[ G\bigl([j(z)]\bigr) = \const_1 \in B_\Lambda.
	\]
	Putting all together, it follows that
	\[ F\bigl([i(z')]\bigr) = [f\circ i \circ p_*(z)] = T\circ G\bigl([j(z)]\bigr)
			= T(\const_1) = \const_1\in A_\Gamma
	\]
	and it follows from Proposition~\ref{prop:locality} that~$z'$ is an
	$\alpha$-parametrised fundamental cycle.
\end{proof}

\begin{rem}[essentially free]
	Note that we never used that the whole action~$\Gamma\curvearrowright Z$ is essentially
	free, but only that the restrictions of the action to every ($\pi_1$-injective) 
	orbit~$S^1\cdot x$ on~$Z$ are
	essentially free (in Proposition~\ref{prop:trivialbundle}
	as well as the inductive filling argument using
	UBC for~$S^1$ (Theorem~\ref{thm:ubc}) in the proof of Theorem~\ref{thm:mainthm}).
\end{rem}


\medskip
\vfill

\noindent
\emph{Daniel Fauser}\\[.5em]
  {\small
  \begin{tabular}{@{\qquad}l}
    Fakult\"at f\"ur Mathematik, 
    Universit\"at Regensburg, 
    93040 Regensburg\\
    \textsf{daniel.fauser@gmx.de},
		\textsf{http://www.mathematik.uni-r.de/fauser}
  \end{tabular}}
\end{document}